\theoremstyle{plain}
\newtheorem{theorem}{Theorem}[section]
\newtheorem{lemma}[theorem]{Lemma}
\newtheorem{corollary}[theorem]{Corollary}
\theoremstyle{definition}
\theoremstyle{remark}
\newtheorem{remark}[theorem]{Remark}
\newcommand{\bQ}{\mathbb{Q}}
\def\bin #1#2 {\left( \matrix { #1 \cr #2 \cr } \right) }
\begin{document}

\title[On a resolution of singularities with two strata]
{On  a resolution of singularities with two strata}

\author{Vincenzo Di Gennaro }
\address{Universit\`a di Roma \lq\lq Tor Vergata\rq\rq, Dipartimento di Matematica,
Via della Ricerca Scientifica, 00133 Roma, Italy.}
\email{digennar@axp.mat.uniroma2.it}

\author{Davide Franco }
\address{Universit\`a di Napoli
\lq\lq Federico II\rq\rq, Dipartimento di Matematica e
Applicazioni \lq\lq R. Caccioppoli\rq\rq, P.le Tecchio 80, 80125
Napoli, Italy.} \email{davide.franco@unina.it}

\abstract Let $X$ be a complex, irreducible, quasi-projective
variety, and $\pi:\widetilde X\to X$ a resolution of singularities
of $X$. Assume that the singular locus ${\text{Sing}}(X)$ of $X$ is
smooth, that the induced map $\pi^{-1}({\text{Sing}}(X))\to
{\text{Sing}}(X)$ is a smooth fibration admitting a cohomology
extension of the fiber, and that $\pi^{-1}({\text{Sing}}(X))$ has a
negative normal bundle in $\widetilde X$. We present a very short
and explicit proof of the Decomposition Theorem for $\pi$, providing
a way to compute the intersection cohomology of $X$ by means  of the
cohomology of $\widetilde X$ and of $\pi^{-1}({\text{Sing}}(X))$.
Our result applies to special Schubert varieties with two strata,
even if $\pi$ is non-small. And to certain hypersurfaces of $\mathbb
P^5$ with one-dimensional singular locus.

\bigskip\noindent {\it{Keywords}}: Projective variety, Smooth fibration, Resolution of
singularities, Derived category, Intersection cohomology,
Decomposition Theorem,  Poincar\'e polynomial, Betti numbers,
Schubert varieties.

\medskip\noindent {\it{MSC2010}}\,: Primary 14B05; Secondary 14E15, 14F05,
14F43, 14F45, 14M15, 32S20, 32S60, 58K15.

\endabstract
\maketitle

\bigskip
\section{Introduction}

The Decomposition Theorem is a beautiful and very deep result about
algebraic maps. In the words of MacPherson \lq\lq it contains as
special cases the deepest homological properties of algebraic maps
that we know\rq\rq  \cite{Mac83}, \cite{Williamson}. In literature,
one can find different approaches to the Decomposition Theorem
\cite{BBD}, \cite{DeCM2}, \cite{DeCM3}, \cite{Saito},
{\cite{Williamson}. Let us say they have in common a fairly heavy
formalism,  that may discourage the reader to the point that the
Decomposition Theorem is often used like a "black box" by many
authors \cite{Haines}, \cite{Billey}. Furthermore, it is often very
difficult to calculate the intersection cohomology of a singular
algebraic variety, unless either the singular locus if finite, or
the variety admits a small resolution with known Betti numbers.

However, there are many special cases for which the Decomposition
Theorem admits a simplified approach. One of these is  the case of
varieties with isolated singularities. This is a key point also in
the general case since, as observed in \cite[Remark
2.14]{Williamson}, the proof of the Decomposition Theorem proceeds
by induction on the dimension of the strata of the singular locus.

For instance, in our previous work \cite{DGF3}, we reduced the proof
of the Decomposition Theorem for varieties with isolated
singularities, to the vanishing of certain maps between ordinary
cohomology groups \cite[Theorem 3.1]{DGF3}. This in turn is related
with the existence of a \lq\lq natural Gysin morphism\rq\rq. By a
natural Gysin morphism we mean a {topological bivariant class}
\cite[p. 83]{FultonCF}, \cite{DeCM}, \cite{DGF1}: $$\theta \in
T^0(\widetilde
X\stackrel{\pi}{\rightarrow}X)=Hom_{D^b(X)}(R\pi_*\mathbb
Q_{\widetilde X} , \mathbb Q_X), $$ commuting with restrictions to
the smooth locus of $X$ (here  $\pi: \widetilde X\rightarrow X$ is a
resolution of singularities  of $X$ with {isolated singularities}).
In \cite[Theorem 1.2]{DGF3}, we gave a complete characterization of
morphisms like $\pi$ admitting a natural Gysin morphism, providing a
relationship between the {Decomposition Theorem} and Bivariant
Theory. In fact, $\pi$ admits a natural Gysin morphism if and only
if $X$ is a $\bQ$-{intersection cohomology manifold}, i.e.
$IC^{\bullet}_X\cong \bQ_X[n]$ in $D^b(X)$ ($IC^{\bullet}_X$ denotes
the {intersection cohomology complex} of $X$ \cite[p. 156]{Dimca2},
\cite{Massey}). In this case, there is a unique natural Gysin
morphism $\theta$, and it arises from the Decomposition Theorem.

Our aim in this work is to develop another case for which the
Decomposition Theorem admits a simplified approach. More precisely,
we assume $X$ to be a complex, irreducible, quasi-projective variety
of dimension $n\geq 1$, and $$\pi:\widetilde X\to X$$ a resolution
of singularities of $X$. Moreover, we assume that the singular locus
${\text{Sing}}(X)$ of $X$ is smooth, and that the induced fibre
square diagram:
$$
\begin{array}{ccccccc}
\pi^{-1}({\text{Sing}}(X))&\stackrel
{}{\hookrightarrow}&\widetilde X\\
\stackrel {}{}\downarrow & &\stackrel
{\pi}{}\downarrow \\
{\text{Sing}}(X)&\stackrel
{}{\hookrightarrow}&X\\
\end{array}
$$
is such that $\pi^{-1}({\text{Sing}}(X))\to {\text{Sing}}(X)$ is a
smooth fibration, with negative normal bundle, admitting a
cohomology extension of the fiber (see Notations, $(iii)$, below,
for a precise statement of our assumptions).

Our main result is a very short and explicit proof of the
Decomposition Theorem (compare with Theorem \ref{ExDecTh}),
providing a way to compute the intersection cohomology of $X$ by
means  of the cohomology of $\widetilde X$ and of
$\pi^{-1}({\text{Sing}}(X))$ (Corollary \ref{ExIC}). In the last two
sections, we apply our main result. First, to  {\it special Schubert
varieties with two strata}, for which it is known to exist both a
small and a non-small resolution. Comparing our computation of the
intersection cohomology by means of Corollary \ref{ExIC}, with the
one given in \cite{CGM}, we find some polynomial identities
apparently not known so far (Remark \ref{concl2}, $(ii)$). Next, we
compute the intersection cohomology of certain hypersurfaces of
$\mathbb P^5$ with one-dimensional singular locus. As far as we
know, Corollary \ref{new} is completely new.

\bigskip
\section{Notations}

$(i)$ All cohomology and intersection cohomology groups are with
$\mathbb Q$-coefficients.

\medskip
$(ii)$ Let $Y$ be a complex, possible reducible, quasi-projective
variety. We denote by $H^{\alpha}(Y)$ and $IH^{\alpha}(Y)$ its
cohomology and intersection cohomology groups ($\alpha\in\mathbb
Z$). Let $D^b(Y)$ be the bounded derived category of sheaves of
$\mathbb Q$-vector spaces on $Y$. Let $\mathcal F^{\bullet}\in
D^b(Y)$ be a complex of sheaves. We denote by $\mathcal
H^\alpha(\mathcal F^{\bullet})$ its cohomology sheaves, and by
$\mathbb H^\alpha(\mathcal F^{\bullet})$ its hypercohomology
groups. Let $IC^{\bullet}_{Y}$ be the intersection cohomology
complex of $Y$. If $Y$ is irreducible, we have
$IH^{\alpha}(Y)=\mathcal H^\alpha(IC^{\bullet}_{Y}[-\dim Y])$. If
$Y$ is irreducible and nonsingular, and $\mathbb Q_Y$ is the
constant sheaf $\mathbb Q$ on $Y$, we have $IC^{\bullet}_{Y}\cong
\mathbb Q_Y[\dim_{\mathbb C} Y]$.

\medskip
$(iii)$ Let $X$ be a complex, irreducible, quasi-projective variety
of dimension $n\geq 1$, and $$\pi:\widetilde X\to X$$ a resolution
of singularities of $X$. This means that $\pi$ is a projective,
surjective, birational morphism, such that $\widetilde X$ is
irreducible and nonsingular. Fix a closed, nonsingular subvariety
$\Delta$ of $X$, of  pure dimension $m$. Consider the induced fibre
square commutative diagram:
\begin{equation}\label{D1}
\begin{array}{ccccccc}
\widetilde \Delta&\stackrel
{\jmath}{\hookrightarrow}&\widetilde X\\
\stackrel {\rho}{}\downarrow & &\stackrel
{\pi}{}\downarrow \\
\Delta&\stackrel
{\imath}{\hookrightarrow}&X,\\
\end{array}
\end{equation}
where $\widetilde \Delta =\pi^{-1}(\Delta)$, $\imath$ and $\jmath$
are the inclusion maps, and $\rho$ the restriction of $\pi$. We make
the following assumptions $(a1)$, $(a2)$, and $(a3)$:

\medskip
$(a1)$ ${\text{Sing}}(X)\subseteq \Delta$, and the induced  map
$\pi^{-1}(X\backslash \Delta)\to X\backslash \Delta$ is an
isomorphism.

\medskip
$(a2)$ $\widetilde\Delta$ is nonsingular, of pure dimension $m+p$,
and the map $\rho:\widetilde \Delta\to \Delta$ is a smooth
fibration, with fiber say $G$, well-defined up to diffeomorphisms,
such that the restriction map $H^{\alpha}(\widetilde \Delta)\to
H^{\alpha}(G)$ is onto for all $\alpha\in\mathbb Z$.

\medskip
\noindent In view of previous assumptions, the fiber $G$ is a
projective variety, nonsingular, purely dimensional, of dimension
$p$. Let $N$ be the normal bundle of $\widetilde\Delta$ in
$\widetilde X$, and set $$q:=n-m-p$$ its rank. Let $\overline c\in
H^{2q}(\widetilde \Delta)$ be the top Chern class of $N$, and let
$c\in H^{2q}(G)$ be the restriction of $\overline c$ to $G$.

\medskip
$(a3)$  The map $H^{\alpha}(G)\stackrel {\cdot\, \cup\,
c}{\longrightarrow} H^{\alpha+2q}(G)$, determined by cup-product
with $c$, is onto for all integers $\alpha\geq p-q$.

\medskip
\begin{remark}\label{BG}  Combining the Universal Coefficient Theorem with
the Poincar\'e Duality Theorem, it follows  that condition $(a3)$ is
equivalent to require that the map $H^{\alpha}(G)\stackrel {\cdot\,
\cup\, c}{\longrightarrow} H^{\alpha+2q}(G)$ is injective for all
integers $\alpha\leq p-q$. Notice that if $ p-q<0$, then condition
$(a3)$ is satisfied. In fact, in this case, we have
$H^{\alpha+2q}(G)=0$ for all $\alpha\geq p-q$ (and $H^{\alpha}(G)=0$
for all $\alpha\leq p-q$). Moreover, in view of the Hard Lefschetz
Theorem for ample bundles, the condition $(a3)$ is satisfied when
the restriction of $N$ to $G$, either of $N^{\vee}$ to $G$, is ample
\cite[p. 69]{L}.
\end{remark}

\medskip
$(iv)$ For all $\alpha\in\mathbb Z$, set
$A^{\alpha}:=H^{\alpha}(G)$, and $a^{\alpha}:=\dim H^{\alpha}(G)$.
When $\alpha\leq p-q$, set
$B^{\alpha+2q}:=\Im(H^{\alpha}(G)\stackrel {\cdot\, \cup\,
c}{\longrightarrow} H^{\alpha+2q}(G))\cong H^{\alpha}(G)$. When
$\alpha\geq p-q$, choose a subspace $C^{\alpha}\subseteq
H^{\alpha}(G)$ such that $C^{\alpha}\cong H^{\alpha+2q}(G)$ via
$H^{\alpha}(G)\stackrel {\cdot\, \cup\, c}{\longrightarrow}
H^{\alpha+2q}(G)$. Observe that by the Universal Coefficient Theorem
and the Poincar\'e Duality Theorem, it follows that
\begin{equation}\label{PD}
A^{p-q-\alpha}\cong C^{p-q+\alpha}\quad{\text{for every $\alpha\geq
0$.}}
\end{equation}

\medskip
\begin{remark}\label{Ray}  By previous assumption $(a2)$,
there exists a {\it cohomology extension
$H^{*}(G)\stackrel{\theta}{\to} H^{*}(\widetilde \Delta)$ of the
fiber} \cite[p. 256-258]{Spanier}. By the Leray-Hirsch Theorem
\cite[p. 182 and p. 195]{Voisin}, \cite[Lemma 2.5 and proof]{DGF2},
it determines a decomposition in $D^b(\Delta)$:
\begin{equation}\label{LH}
\sum_{\alpha=0}^{2p} A^{\alpha}\otimes \mathbb Q_{\Delta}[-\alpha]
\stackrel{\theta}{\cong}R\rho_*\mathbb Q_{\widetilde\Delta}.
\end{equation}
\end{remark}

\medskip
$(v)$ We define the following complex $\mathcal F^{\bullet}$ in
$D^b(X)$. It will appear in the claim of Theorem \ref{ExDecTh}
below. When $p-q\geq 0$, we set:
$$
\mathcal F^{\bullet}:=\left(\sum_{\alpha=0}^{p-q}A^{\alpha}\otimes_{\mathbb
Q}
R{\imath}_*\mathbb Q_{\Delta}[n-2q-\alpha]\right)
\oplus
\left(\sum_{\alpha=p-q+1}^{2p-2q}C^{\alpha}\otimes_{\mathbb
Q}
R{\imath}_*\mathbb Q_{\Delta}[n-2q-\alpha]\right).
$$
When $p-q<0$, we set $\mathcal F^{\bullet}:=0$. If $p-q = 0$, then
we simply have
$$
\mathcal F^{\bullet}=H^0(G)\otimes_{\,\mathbb Q} R{\imath}_*\mathbb
Q_{\Delta}[n-2q].
$$

\medskip
$(vi)$ We denote by $IH_{ X}(t)$  the Poincar\'e polynomial of the
intersection cohomology of $X$, i.e.
$$
IH_{X}(t):=\sum_{\alpha\in\mathbb Z}\dim_{\mathbb
Q}IH^{\alpha}(X)\,t^{\alpha}.
$$
We denote by $H_{\widetilde X}(t)$ and $H_{\Delta}(t)$ the
Poincar\'e polynomials of the cohomology of $\widetilde X$ and
$\Delta$, i.e.
$$
H_{\widetilde X}(t):=\sum_{\alpha\in\mathbb Z}\dim_{\mathbb
Q}H^{\alpha}(\widetilde X)\,t^{\alpha},\quad{\text{and}}\quad
H_{\Delta}(t):=\sum_{\alpha\in\mathbb Z}\dim_{\mathbb
Q}H^{\alpha}(\Delta)\,t^{\alpha}.
$$

\medskip
$(vii)$ We define the polynomial  $g(t)$ as follows. First, when
$p-q\geq 0$, we set:
$$
r(t):=\frac{1}{2}a^{p-q}t^{p-q}+\sum_{0\leq \alpha\leq
p-q-1}a^{\alpha}t^{\alpha}.
$$
Next we define:
$$
g(t):=t^{2q}r(t)+t^{2p}r\left(t^{-1}\right).
$$
When $p-q<0$, we set $g(t):=0$. When $p-q=0$ and $G$ is connected,
we simply have $g(t)=t^{2p}$. We denote by $f(t)$ the Poincar\'e
polynomial of the complex $\mathcal F^{\bullet}[-n]$, i.e.
$$
f(t):=\sum_{\alpha\in {\mathbb Z}} {\mathbf h}^{\alpha}(\mathcal
F^{\bullet}[-n])\,t^{\alpha},
$$
where $\mathbf h^{\alpha}(\mathcal F^{\bullet}[-n]):=\dim_{\mathbb
Q} {\mathbb H}^{\alpha}(\mathcal F^{\bullet}[-n])$ denotes the
dimension of the hypercohomology.

\medskip
$(viii)$ Let $\mathbb G_k(\mathbb C^l)$ denote the Grassmann variety of $k$ planes in
$\mathbb C^l$ (compare with \cite[p. 328]{CGM}). Recall that
$$
\dim \mathbb G_k(\mathbb C^l)=k(l-k),
$$
and that the Poincar\'e polynomial
$$
Q_k^l:=Q_k^l(t):=\sum_{\alpha\in \mathbb Z}\dim H^{\alpha}(\mathbb G_k(\mathbb C^l))t^{\alpha}
$$
of $\mathbb G_k(\mathbb C^l)$ is equal to
\begin{equation}\label{grass}
Q_k^l=\frac{P_l}{P_kP_{l-k}},
\end{equation}
where, for every integer $\alpha\geq 0$, we set:
$$
P_{\alpha}:=P_{\alpha}(t):=h_0\cdot h_1\cdot\dots\cdot h_{\alpha-1}\quad{\text{and}}\quad
h_{\alpha}:=h_{\alpha}(t):=1+t^2+t^4+\dots+t^{2\alpha}
$$
(assume that $P_0=1$, and notice that $P_1=1$).

\bigskip
\section{The main results}

We are in position to state our main results. We keep the notations
stated before, together the assumptions $(a1)$, $(a2)$, and $(a3)$.

\begin{theorem}
\label{ExDecTh} In $D^b(X)$ we have a decomposition:
$$R\,\pi_*\mathbb Q_{\widetilde X}[n]\cong IC^{\bullet}_X\oplus  \mathcal F^{\bullet}.$$
\end{theorem}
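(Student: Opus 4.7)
The plan is to construct an explicit morphism $\phi:\mathcal{F}^\bullet\to R\pi_*\mathbb{Q}_{\widetilde X}[n]$ exhibiting $\mathcal{F}^\bullet$ as a direct summand, together with a retraction $\psi$ in the opposite direction, and then identify the complementary summand $\mathcal{K}^\bullet:=\ker\psi$ with $IC^\bullet_X$ via its support/cosupport characterization. The main ingredients are the cohomology extension $\theta$ from \eqref{LH}, the Gysin morphism associated to the smooth closed embedding $j:\widetilde\Delta\hookrightarrow\widetilde X$, and the Hard-Lefschetz-type assumption $(a3)$.

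Since $j$ is smooth of codimension $q$, one has $j^!\mathbb{Q}_{\widetilde X}\cong\mathbb{Q}_{\widetilde\Delta}[-2q]$, yielding the Gysin morphism $v:j_*\mathbb{Q}_{\widetilde\Delta}[-2q]\to\mathbb{Q}_{\widetilde X}$ and the adjunction unit $u:\mathbb{Q}_{\widetilde X}\to j_*\mathbb{Q}_{\widetilde\Delta}$. Pushing forward via $R\pi_*$ and using proper base change $R\pi_*\circ j_*=i_*\circ R\rho_*$, the morphism $v$ gives $i_*R\rho_*\mathbb{Q}_{\widetilde\Delta}[n-2q]\to R\pi_*\mathbb{Q}_{\widetilde X}[n]$; decomposing the source via \eqref{LH} as $\bigoplus_\alpha A^\alpha\otimes Ri_*\mathbb{Q}_\Delta[n-2q-\alpha]$ and restricting to the subspaces $A^\alpha$ (for $\alpha\leq p-q$) and $C^\alpha$ (for $\alpha>p-q$) defines $\phi$. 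Similarly, $u$ produces $R\pi_*\mathbb{Q}_{\widetilde X}[n]\to\bigoplus_\beta A^\beta\otimes Ri_*\mathbb{Q}_\Delta[n-\beta]$; after reindexing $\beta=\alpha+2q$ (so that $n-\beta=n-2q-\alpha$) and composing with projections $A^{\alpha+2q}\twoheadrightarrow A^\alpha$ for $\alpha\leq p-q$ (splitting the injective cup product with $c$) and $A^{\alpha+2q}\twoheadrightarrow C^\alpha$ for $\alpha>p-q$ (inverse of the bijection given by cup product with $c$ restricted to $C^\alpha$), one obtains $\psi:R\pi_*\mathbb{Q}_{\widetilde X}[n]\to\mathcal{F}^\bullet$.

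The composition $\psi\circ\phi$ factors through $R\pi_*(u\circ v)$. Since $u\circ v:j_*\mathbb{Q}_{\widetilde\Delta}[-2q]\to j_*\mathbb{Q}_{\widetilde\Delta}$ is cup product with the Euler class $\overline c\in H^{2q}(\widetilde\Delta)$ of $N$, and stalkwise on $\Delta$ this reduces to cup product with $c=\overline c|_G$ on $H^*(G)$, the choice of projections made for $\psi$ combined with $(a3)$ and Remark~\ref{BG} ensures that $\psi\circ\phi$ acts as the identity on each summand of $\mathcal{F}^\bullet$ at every stalk of $\Delta$. Hence $\psi\circ\phi$ is an isomorphism and we obtain a splitting
$$R\pi_*\mathbb{Q}_{\widetilde X}[n]\;\cong\;\mathcal{F}^\bullet\oplus\mathcal{K}^\bullet,\qquad\mathcal{K}^\bullet:=\ker\psi.$$
Since $\pi$ is an isomorphism over $X\setminus\Delta$ and $\mathcal{F}^\bullet$ is supported on $\Delta$, $\mathcal{K}^\bullet|_{X\setminus\Delta}\cong\mathbb{Q}_{X\setminus\Delta}[n]$. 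To conclude $\mathcal{K}^\bullet\cong IC^\bullet_X$, it suffices to verify the support/cosupport vanishings $\mathcal{H}^k(i^*\mathcal{K}^\bullet)=0$ for $k\geq -m$ and $\mathcal{H}^k(i^!\mathcal{K}^\bullet)=0$ for $k\leq -m$. These follow by computing $i^*R\pi_*\mathbb{Q}_{\widetilde X}[n]\cong R\rho_*\mathbb{Q}_{\widetilde\Delta}[n]$ and $i^!R\pi_*\mathbb{Q}_{\widetilde X}[n]\cong R\rho_*\mathbb{Q}_{\widetilde\Delta}[n-2q]$ via proper base change, decomposing them via \eqref{LH}, subtracting the contributions of $i^*\mathcal{F}^\bullet$ and $i^!\mathcal{F}^\bullet$, and invoking $(a3)$ with Remark~\ref{BG} to handle the residuals.

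The main obstacle I expect is making precise that $\psi\circ\phi$ is literally controlled by cup product with $\overline c$ as a morphism in $D^b(\Delta)$---not merely at the level of stalks---and that the off-diagonal contributions coming from the multiplicative structure of $H^*(\widetilde\Delta)$ via the Leray-Hirsch decomposition are absorbed by the chosen projections, so that the reduction of $\psi\circ\phi$ to the identity on $\mathcal{F}^\bullet$ holds globally. Once this compatibility is secured, the splitting and the identification $\mathcal{K}^\bullet\cong IC^\bullet_X$ are essentially formal consequences of $(a3)$.
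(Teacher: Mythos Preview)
Your proposal is correct and follows essentially the same route as the paper: both arguments build the splitting from the Gysin/pull-back pair for $\jmath$, pushed down by $R\pi_*$, and use the self-intersection formula together with $(a3)$ to see that the composite is (stalkwise, hence globally) an isomorphism onto a summand; the ``off-diagonal'' worry you flag is handled exactly as in the paper, where the map is upper-triangular in the Leray--Hirsch grading with invertible diagonal, so a change of $\theta$ absorbs those terms. The only notable divergence is in identifying $\mathcal K^\bullet$ with $IC_X^\bullet$: the paper first proves $\mathcal F^\bullet$ is self-dual (Lemma~\ref{lemmaabc}\,(a)), deduces that $\mathcal K^\bullet$ is self-dual, and therefore needs only the support vanishing $\mathcal H^\alpha(\mathcal K^\bullet)_x=0$ for $\alpha\geq -m$ (Lemma~\ref{lemmaabc}\,(c)), whereas you propose to verify both support and cosupport conditions directly via $i^*$ and $i^!$---this works too, but the self-duality shortcut is cleaner.
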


\begin{corollary}
\label{ExIC}
$$
IH_X(t)=H_{\widetilde X}(t)-H_{\Delta}(t)g(t).
$$
\end{corollary}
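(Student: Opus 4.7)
The plan is to derive the stated identity by applying the hypercohomology functor to the decomposition of Theorem \ref{ExDecTh} and then matching Poincar\'e polynomials.

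First I would shift the decomposition of Theorem \ref{ExDecTh} by $[-n]$ to obtain
$$R\pi_*\mathbb Q_{\widetilde X}\cong IC^{\bullet}_X[-n]\oplus \mathcal F^{\bullet}[-n]$$
in $D^b(X)$. Taking hypercohomology and using $\mathbb H^{\alpha}(R\pi_*\mathbb Q_{\widetilde X})=H^{\alpha}(\widetilde X)$ together with $IH^{\alpha}(X)=\mathbb H^{\alpha}(IC^{\bullet}_X[-n])$, the additivity of dimensions over a direct sum immediately gives
$$H_{\widetilde X}(t)=IH_X(t)+f(t),$$
where $f(t)$ is the Poincar\'e polynomial of $\mathcal F^{\bullet}[-n]$ as defined in $(vii)$. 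Hence it only remains to prove the polynomial identity $f(t)=H_{\Delta}(t)g(t)$.

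Next I would compute $f(t)$ directly from the definition of $\mathcal F^{\bullet}$ in $(v)$. Since $R\imath_*$ is the pushforward along the closed embedding $\imath\colon\Delta\hookrightarrow X$, we have $\mathbb H^{\beta}(R\imath_*\mathbb Q_{\Delta}[-2q-\alpha])=H^{\beta-2q-\alpha}(\Delta)$, so each summand $V\otimes R\imath_*\mathbb Q_{\Delta}[-2q-\alpha]$ of $\mathcal F^{\bullet}[-n]$ contributes $(\dim V)\cdot t^{2q+\alpha}\cdot H_{\Delta}(t)$ to $f(t)$. Collecting all contributions yields
$$f(t)=H_{\Delta}(t)\left[\sum_{\alpha=0}^{p-q}a^{\alpha}t^{2q+\alpha}+\sum_{\alpha=p-q+1}^{2p-2q}(\dim C^{\alpha})\,t^{2q+\alpha}\right].$$

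Finally, I would identify the bracketed polynomial with $g(t)$. By the definition of $C^{\alpha}$ in $(iv)$, $\dim C^{\alpha}=a^{\alpha+2q}$; combining this with Poincar\'e duality on the smooth projective variety $G$, namely $a^{\beta}=a^{2p-\beta}$, and reindexing via $\beta=2q+\alpha$, the second sum transforms into $\sum_{\beta=p+q+1}^{2p}a^{2p-\beta}t^{\beta}$, which is precisely the part of $t^{2p}r(t^{-1})$ of weight strictly greater than $p+q$. The first sum, split as $\sum_{\alpha=0}^{p-q-1}a^{\alpha}t^{2q+\alpha}+a^{p-q}t^{p+q}$, contributes the part of $t^{2q}r(t)$ of weight strictly less than $p+q$, and the two half contributions $\tfrac12 a^{p-q}t^{p+q}$ from $t^{2q}r(t)$ and from $t^{2p}r(t^{-1})$ sum to $a^{p-q}t^{p+q}$. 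Thus the bracketed polynomial equals $t^{2q}r(t)+t^{2p}r(t^{-1})=g(t)$, and the claimed identity follows. The only subtle step is this combinatorial matching, which is why $r(t)$ in $(vii)$ is defined with the half-weight coefficient $\tfrac12 a^{p-q}$ in middle degree; once this bookkeeping is carried out the identity is immediate, and the case $p-q<0$ is trivial since then $\mathcal F^{\bullet}=0$ and $g(t)=0$.
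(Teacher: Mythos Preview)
Your proof is correct and follows essentially the same route as the paper: apply hypercohomology to the decomposition of Theorem \ref{ExDecTh} (shifted by $-n$) and then identify $f(t)$ with $H_{\Delta}(t)g(t)$. The only difference is organisational: the paper packages the polynomial identity $f(t)=H_{\Delta}(t)g(t)$ as part (d) of Lemma \ref{lemmaabc} and simply cites it, whereas you carry out that same computation inline; your combinatorial matching via Poincar\'e duality and the reindexing $\beta=2q+\alpha$ is exactly the content of that lemma's proof.
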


\medskip
In order to prove our results, we need the following:

\begin{lemma} \label{lemmaabc} $(a)$ $\mathcal
F^{\bullet}$ is self-dual in $D^b(X)$.

\smallskip $(b)$ $\mathcal F^{\bullet}$ is a
direct summand of $R{\pi}_{*}\mathbb Q_{\widetilde X}[n]$ in
$D^b(X)$.

\smallskip
$(c)$ For every $x\in\Delta$ and $\alpha\geq -m$, one has
$$\mathcal H^\alpha\left(R{\pi}_{*}\mathbb Q_{\widetilde
X}[n]\right)_x\cong \mathcal H^\alpha\left(\mathcal
F^{\bullet}\right)_x\cong A^{\alpha+n}.$$

\smallskip
$(d)$ $f(t)=H_{\Delta}(t)g(t).$
\end{lemma}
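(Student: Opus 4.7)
The plan is to dispatch (a)--(d) in sequence, with (b) carrying the main burden and (a), (c), (d) reducing to formal duality or direct computation. The essential tools throughout are the Leray--Hirsch decomposition \eqref{LH}, proper base change, and the purity isomorphism for the smooth codimension-$q$ embedding $\jmath:\widetilde\Delta\hookrightarrow \widetilde X$.

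For (a), I would apply the Verdier duality functor $\mathcal D_X$ summand by summand. Since $\imath$ is a closed immersion and $\Delta$ is smooth of dimension $m$, $\mathcal D_X\circ R\imath_*=R\imath_*\circ\mathcal D_\Delta$ and $\mathcal D_\Delta(\mathbb Q_\Delta[k])=\mathbb Q_\Delta[2m-k]$. Using $n=m+p+q$, a short shift calculation shows that duality exchanges the summand of index $\alpha$ with the one of index $2p-2q-\alpha$. The multiplicities match by \eqref{PD}: for $0\le \alpha<p-q$ the space $A^\alpha$ pairs with $C^{2p-2q-\alpha}$, and the central summand $A^{p-q}$ is self-dual via Poincar\'e duality $A^{p-q}\cong A^{p+q}$ on $G$.

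Part (b) is the main obstacle. I would construct an inclusion $\phi:\mathcal F^\bullet\to R\pi_*\mathbb Q_{\widetilde X}[n]$ and a retraction $\psi$ with $\psi\circ\phi=\mathrm{id}$. Proper base change gives $\imath^*R\pi_*\mathbb Q_{\widetilde X}[n]\cong R\rho_*\mathbb Q_{\widetilde\Delta}[n]$, and purity (via $\jmath^!\mathbb Q_{\widetilde X}\cong \mathbb Q_{\widetilde\Delta}[-2q]$) gives $\imath^!R\pi_*\mathbb Q_{\widetilde X}[n]\cong R\rho_*\mathbb Q_{\widetilde\Delta}[n-2q]$; applying \eqref{LH} decomposes each as a direct sum of shifted constant sheaves on $\Delta$ weighted by the $A^\alpha$. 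Define $\phi$ as the natural inclusion $\mathcal F^\bullet\hookrightarrow \imath_*\imath^!R\pi_*\mathbb Q_{\widetilde X}[n]$ (arising from $A^\alpha$ for $\alpha\le p-q$ and $C^\alpha\subset A^\alpha$ for $\alpha>p-q$) followed by the co-unit $\imath_*\imath^!\to \mathrm{id}$, and $\psi$ as the unit $\mathrm{id}\to \imath_*\imath^*$ followed by projection onto the Leray--Hirsch summands of index $\ge 2q$ and a retraction onto $\mathcal F^\bullet$. The crucial observation is that the composite co-unit-then-unit equals the Gysin map, acting on the $\alpha$-th summand by cup product with the top Chern class $c$. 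Choosing once and for all a linear complement of $\mathrm{Im}(\cdot\cup c)$ in each $A^{\alpha+2q}$ (using injectivity on $A^\alpha$ for $\alpha\le p-q$, and the isomorphism $C^\alpha\xrightarrow{\sim} A^{\alpha+2q}$ for $\alpha>p-q$, both supplied by $(a3)$ together with \eqref{PD}) and inverting yields $\psi\circ\phi=\mathrm{id}$.

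For (c), proper base change gives $\mathcal H^\alpha(R\pi_*\mathbb Q_{\widetilde X}[n])_x=H^{\alpha+n}(\rho^{-1}(x))=A^{\alpha+n}$ for $x\in\Delta$. For $\mathcal F^\bullet$, the stalk $\mathcal H^j(R\imath_*\mathbb Q_\Delta)_x$ at $x\in\Delta$ is $\mathbb Q$ concentrated in degree $0$, so only the summand with $\beta=\alpha+n-2q$ contributes: for $\alpha\in(-m,2p-n]$ it is $C^\beta\cong A^{\beta+2q}=A^{\alpha+n}$ by construction of $C^\beta$, and for $\alpha=-m$ one has $\beta=p-q$ with $A^{p-q}\cong A^{p+q}=A^{\alpha+n}$ by Poincar\'e duality on $G$. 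For (d), the hypercohomology of $\mathcal F^\bullet[-n]$ has Poincar\'e polynomial that factors out $H_\Delta(t)$; after reindexing the $C$-contribution by $\gamma=\alpha+2q$ and using $\dim C^\alpha=a^{\alpha+2q}$, one compares the resulting bracketed polynomial with the expansion of $g(t)=t^{2q}r(t)+t^{2p}r(t^{-1})$, in which Poincar\'e duality $a^{2p-\alpha}=a^\alpha$ rewrites $t^{2p}r(t^{-1})$ and the two halves $\tfrac{1}{2}a^{p-q}t^{p+q}$ combine to form the middle coefficient.
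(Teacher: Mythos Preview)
Your proposal is correct and follows essentially the same approach as the paper. The only cosmetic difference is packaging: you phrase the Gysin and pull-back morphisms via the adjunction co-unit $\imath_*\imath^!\to\mathrm{id}$ and unit $\mathrm{id}\to\imath_*\imath^*$ together with purity, whereas the paper writes out the sequence $R(\imath\circ\rho)_*\mathbb Q_{\widetilde\Delta}\to R\pi_*\mathbb Q_{\widetilde X}[2q]\to R(\imath\circ\rho)_*\mathbb Q_{\widetilde\Delta}[2q]$ directly and invokes the self-intersection formula for the composite; both arguments hinge on the same identification of that composite with cup product by $c$ and on assumption $(a3)$ to invert it on the relevant summands.
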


\begin{proof}[Proof of the Lemma]
$(a)$ If we set $\beta=p-q-\alpha$, we have:
$$
\sum_{\alpha=0}^{p-q-1}A^{\alpha}\otimes_{\mathbb Q}
R{\imath}_*\mathbb Q_{\Delta}[n-2q-\alpha] =
\sum_{\beta=1}^{p-q}A^{p-q-\beta}\otimes_{\mathbb Q}
R{\imath}_*\mathbb Q_{\Delta}[m+\beta].
$$
On the other hand, setting $\beta=\alpha-(p-q)$, we have:
$$
\sum_{\alpha=p-q+1}^{2p-2q}C^{\alpha}\otimes_{\mathbb Q}
R{\imath}_*\mathbb Q_{\Delta}[n-2q-\alpha] =
\sum_{\beta=1}^{p-q}C^{p-q+\beta}\otimes_{\mathbb Q}
R{\imath}_*\mathbb Q_{\Delta}[m-\beta].
$$
By (\ref{PD}), we deduce:
$$
\mathcal F^{\bullet}\cong \left(A^{p-q}\otimes_{\mathbb
Q}R{\imath}_*\mathbb Q_{\Delta}[m]\right) \oplus
\left(\sum_{\beta=1}^{p-q}A^{p-q-\beta}\otimes_{\mathbb Q}
R{\imath}_*\left(\mathbb Q_{\Delta}[m+\beta]\oplus \mathbb
Q_{\Delta}[m-\beta]\right)\right).
$$
Taking into account that $\mathbb Q_{\Delta}[m]$ is self-dual in
$D^b(\Delta)$, previous formula shows $\mathcal F^{\bullet}$ as a
direct sum of self-dual complexes (compare with \cite[p. 69,
Proposition 3.3.7 (ii), and Remark 3.3.6 (i)]{Dimca2}).

\medskip
$(b)$ Consider the commutative diagram (\ref{D1}). The inclusion
$\widetilde\Delta\stackrel{\jmath}{\hookrightarrow} \widetilde X$
gives rise a pull-back morphism $\mathbb Q_{\widetilde X}\to
R{\jmath}{_*}\mathbb Q_{\widetilde\Delta}$ in $D^b(\widetilde X)$
\cite[p. 176]{Voisin}. On the other hand, since $\widetilde\Delta$
is smooth (Notations, $(iii)$, $(a2)$), the inclusion
$\widetilde\Delta\stackrel{\jmath}{\hookrightarrow} \widetilde X$
induces also a Gysin morphism $R{\jmath}{_*}\mathbb
Q_{\widetilde\Delta}\to \mathbb Q_{\widetilde X}[2q]$ \cite[p.
83]{FultonCF}. So, we have the following sequence of morphisms:
$$
R\jmath{_*}\mathbb Q_{\widetilde\Delta}\to \mathbb
Q_{\widetilde X}[2q]\to
R{\jmath}{_*}\mathbb Q_{\widetilde\Delta}[2q].
$$
Composing with $\pi$, and taking into account that diagram
(\ref{D1}) commutes, we deduce the sequence in $D^b(X)$:
\begin{equation}\label{firstd}
R(\imath\circ \rho){_*}\mathbb
Q_{\widetilde\Delta}\to R{\pi}_{*}\mathbb
Q_{\widetilde X}[2q]\to R(\imath\circ
\rho){_*}\mathbb Q_{\widetilde\Delta}[2q].
\end{equation}

\smallskip
The idea of the proof consists in using the Leray-Hirsch
decomposition (\ref{LH}), and  the self-intersection formula
\cite[p. 92]{FultonCF}, \cite{Jou}, \cite{Yama}, in order to
identify the image of $\mathcal F^{\bullet}[-n+2q]$ via the
composite morphism $R(\imath\circ \rho){_*}\mathbb
Q_{\widetilde\Delta}\to R(\imath\circ \rho){_*}\mathbb
Q_{\widetilde\Delta}[2q]$ given by (\ref{firstd}).

\smallskip
More precisely, consider the complex $\mathcal F^{\bullet}$
(Notations, $(v)$). We may write:
$$
\mathcal
F^{\bullet}[-n+2q]=\left(\sum_{\alpha=0}^{p-q}A^{\alpha}\otimes_{\mathbb
Q} R{\imath}_*\mathbb Q_{\Delta}[-\alpha]\right) \oplus
\left(\sum_{\alpha=p-q+1}^{2p-2q}C^{\alpha}\otimes_{\mathbb Q}
R{\imath}_*\mathbb Q_{\Delta}[-\alpha]\right).
$$
By the Leray-Hirsch Theorem (\ref{LH}), we have:
$$
\sum_{\alpha=0}^{2p}A^{\alpha}\otimes R{\imath}_{*}\mathbb
Q_{\Delta}[-\alpha]\stackrel{\theta}{\cong}R({\imath}\circ
\rho){_*}\mathbb Q_{\widetilde\Delta}.
$$
Since $C^{\alpha}\subseteq A^{\alpha}$ (Notations, $(iv)$), we
deduce a morphism:
$$
\mathcal F^{\bullet}[-n+2q]\rightarrow R({\imath}\circ
\rho){_*}\mathbb Q_{\widetilde\Delta},
$$
and by (\ref{firstd}) we get a sequence:
$$
\mathcal F^{\bullet}[-n+2q]\to R{\pi}_{*}\mathbb Q_{\widetilde
X}[2q] \to R({\imath}\circ \rho){_*}\mathbb
Q_{\widetilde\Delta}[2q]\stackrel{}{\cong}\left(\sum_{\alpha=0}^{2p}A^{\alpha}\otimes
R{\imath}_{*}\mathbb Q_{\Delta}[-\alpha]\right)[2q].
$$
By the self-intersection formula, and the assumption $(a3)$ (compare
also with Notations, $(iv)$), the composite  of these morphisms
sends $\mathcal F^{\bullet}[-n+2q]$ isomorphically onto a subcomplex
$\mathcal G^{\bullet}$ of
$$\left(\sum_{\alpha=0}^{2p}A^{\alpha}\otimes R{\imath}_{*}\mathbb
Q_{\Delta}[-\alpha]\right)[2q]=
\sum_{\alpha=-2q}^{2p-2q}A^{\alpha+2q}\otimes R{\imath}_{*}\mathbb
Q_{\Delta}[-\alpha],$$ which, up to change the cohomology extension
$\theta$, identifies with:
$$
\mathcal
G^{\bullet}\cong\left(\sum_{\alpha=0}^{p-q}B^{\alpha+2q}\otimes_{\mathbb
Q} R{\imath}_*\mathbb Q_{\Delta}[-\alpha]\right)\oplus
\left(\sum_{\alpha=p-q+1}^{2p-2q}A^{\alpha+2q}\otimes_{\mathbb Q}
R{\imath}_*\mathbb Q_{\Delta}[-\alpha]\right).
$$
It follows that the morphism $\mathcal F^{\bullet}[-n+2q]\to
R{\pi}_{*}\mathbb Q_{\widetilde X}[2q]$ has a section. Therefore
$\mathcal F^{\bullet}[-n+2q]$ is a direct summand of
$R{\pi}_{*}\mathbb Q_{\widetilde X}[2q]$, i.e.  $\mathcal
F^{\bullet}$ is a direct summand of $R{\pi}_{*}\mathbb Q_{\widetilde
X}[n]$.

\medskip $(c)$
By \cite[Theorem 2.3.26, (i), p. 41]{Dimca2}, it follows that
$$\mathcal
H^\alpha\left(R{\pi}_{*}\mathbb Q_{\widetilde X}[n]\right)_x\cong
A^{\alpha+n}$$ for every $x\in \Delta$ and every $\alpha\in\mathbb
Z$. On the other hand, for every $\alpha,\beta\in\mathbb Z$, one
has:
$$
\mathcal H^\alpha\left(R{\imath}_{*}\mathbb
Q_{\Delta}[n-2q-\beta]\right)_x\cong
\begin{cases}\mathbb Q \quad{\text{if $\alpha+n-2q-\beta=0$}}\\
0\quad{\text{otherwise.}}
\end{cases}
$$
It follows that, for every $x\in \Delta$ and every $\alpha\geq -m$,
one has:
$$
\mathcal H^\alpha\left(\mathcal F^{\bullet}\right)_x\cong
\begin{cases} A^{p-q} \quad{\text{if $\alpha=-m$}}\\
C^{\alpha+n-2q}\quad{\text{if $\alpha>-m$.}}
\end{cases}
$$
We are done because,  by (Notations, $(iv)$) and (\ref{PD}), in the
case $\alpha=-m$, we have: $A^{p-q}\cong C^{p-q}\cong
A^{p+q}=A^{-m+n}=A^{\alpha+n}$, and, in the case $\alpha>-m$, we
have $C^{\alpha+n-2q}\cong A^{\alpha+n}$.

\medskip $(d)$
First we
analyze the summand on the left  of $\mathcal
F^{\bullet}[-n]$. Set $$h^{\alpha}(\Delta)=\dim_{\mathbb Q} H^{\alpha}(\Delta).$$ We
have:
$$
\sum_{\alpha\in\mathbb Z}\mathbf
h^{\alpha}\left(\sum_{\beta=0}^{p-q}A^{\beta} \otimes_{\mathbb Q}
R{\imath}_*\mathbb Q_{\Delta}[-2q-\beta]\right)t^{\alpha}
$$
$$
= \sum_{\alpha\in\mathbb
Z}\left(\sum_{\beta=0}^{p-q}a^{\beta}\cdot
h^{\alpha-\beta-2q}(\Delta)\right)t^{\alpha}
$$
$$
= \sum_{\alpha\in\mathbb
Z}\left(\sum_{\beta=0}^{p-q}\left(a^{\beta}t^{\beta+2q}\right)\cdot\left(
h^{\alpha-\beta-2q}(\Delta)t^{\alpha-\beta-2q}\right)\right)
$$
$$
=\sum_{\beta=0}^{p-q}\left(\sum_{\alpha\in\mathbb
Z}\left(a^{\beta}t^{\beta+2q}\right)\cdot\left(
h^{\alpha-\beta-2q}(\Delta)t^{\alpha-\beta-2q}\right)\right)
$$
$$
=\left(\sum_{\beta=0}^{p-q}a^{\beta}t^{\beta+2q}\right)H_{\Delta}(t).
$$
As for the summand on the right of $\mathcal F^{\bullet}[-n]$, we
have:
$$
\sum_{\alpha\in\mathbb Z}\mathbf
h^{\alpha}\left(\sum_{\beta=p-q+1}^{2p-2q}A^{\beta+2q}
\otimes_{\mathbb Q} R{\imath}_*\mathbb
Q_{\Delta}[-2q-\beta]\right)t^{\alpha}
$$
$$=\sum_{\alpha\in\mathbb
Z}\left(\sum_{\beta=p-q+1}^{2p-2q}a^{\beta+2q}\cdot
h^{\alpha-\beta-2q}(\Delta)\right)t^{\alpha}
$$
$$
=\sum_{\alpha\in\mathbb
Z}\left(\sum_{\beta=p-q+1}^{2p-2q}\left(a^{\beta+2q}t^{\beta+2q}\right)\cdot\left(
h^{\alpha-\beta-2q}(\Delta)t^{\alpha-\beta-2q}\right)\right)
$$
$$
=\sum_{\beta=p-q+1}^{2p-2q}\left(\sum_{\alpha\in\mathbb
Z}\left(a^{\beta+2q}t^{\beta+2q}\right)\cdot\left(
h^{\alpha-\beta-2q}(\Delta)t^{\alpha-\beta-2q}\right)\right)
$$
$$
=
\left(\sum_{\beta=p-q+1}^{2p-2q}a^{\beta+2q}t^{\beta+2q}\right)H_{\Delta}(t).
$$
Putting together we get:
$$
f(t)=\left(\sum_{\beta=0}^{p-q}a^{\beta}t^{\beta+2q}+
\sum_{\beta=p-q+1}^{2p-2q}a^{\beta+2q}t^{\beta+2q}\right)H_{\Delta}(t).
$$
Now we notice that:
$$
\sum_{\beta=0}^{p-q}a^{\beta}t^{\beta+2q}=t^{2q}\left(\sum_{\beta=0}^{p-q}a^{\beta}t^{\beta}\right).
$$
On the other hand, when $p-q+1\leq \beta \leq 2p-2q$, by the
Poincar\'e Duality Theorem, we have:
$$
a^{\beta+2q}t^{\beta+2q}=\frac{a^{\beta+2q}}{t^{2p-2q-\beta}}t^{2p}=
\frac{a^{2p-2q-\beta}}{t^{2p-2q-\beta}}t^{2p},
$$
with $0\leq 2p-2q-\beta \leq p-q-1$. Therefore, we have:
$$
\sum_{\beta=p-q+1}^{2p-2q}a^{\beta+2q}t^{\beta+2q}
=t^{2p}\left(\sum_{\beta=0}^{p-q-1}\frac{a^{\beta}}{t^{\beta}}\right).
$$
It follows that
$$
f(t)=\left[
t^{2q}\left(\sum_{\beta=0}^{p-q}a^{\beta}t^{\beta}\right) +t^{2p}
\left(\sum_{\beta=0}^{p-q-1}\frac{a^{\beta}}{t^{\beta}}\right)\right]
H_{\Delta}(t)=g(t)H_{\Delta}(t).
$$
\end{proof}

\smallskip We are in position to prove Theorem \ref{ExDecTh} and
Corollary \ref{ExIC}.

\medskip
\begin{proof}[Proof of Theorem \ref{ExDecTh}]
By Lemma \ref{lemmaabc}, (b),  there exists a complex $\mathcal
K^{\bullet}$ such that
\begin{equation}\label{ddec}
R\,\pi_*\mathbb Q_{\widetilde X}[n]\cong \mathcal
K^{\bullet}\oplus\mathcal F^{\bullet}.
\end{equation}
Therefore, we only have to prove that:
$$
\mathcal K^{\bullet}\cong IC_X^{\bullet}.
$$
Observe that $\mathcal K^{\bullet}$ is self-dual, because, by
\cite[p. 69, Proposition 3.3.7]{Dimca2} and Lemma \ref{lemmaabc},
(a), so are both $R\,\pi_*\mathbb Q^{\bullet}_{\widetilde X}[n]$ and
$\mathcal F^{\bullet}$. Now set $U:=X\backslash {\Delta}$, and
denote by ${\jmath}_U:U\hookrightarrow X$ the inclusion. Since the
complex $\mathcal F^{\bullet}$ is supported on $\Delta$, by
(Notations, $(iii)$, $(a1)$), it follows that the restriction
$(\jmath_U)^{-1}\mathcal K^{\bullet}$ of $\mathcal K^{\bullet}$ to
$U$ is $\mathbb Q_U[n]$. Moreover, by (\ref{ddec}), we have
$\mathcal K^{\bullet}\in D^b_c(X)$ \cite{Dimca2}. Therefore,
$\mathcal K^{\bullet}$ is an extension of $\mathbb Q_U[n]$. Hence,
to prove that $\mathcal K^{\bullet}\cong IC^{\bullet}_X$, it
suffices to prove that $\mathcal K^{\bullet}\cong
(\jmath_U)_{!*}\mathbb Q_U[n]$, i.e. that $\mathcal K^{\bullet}$ is
the intermediary extension of $\mathbb Q_U[n]$ \cite[p.156 and
p.135]{Dimca2}. Taking into account that $\mathcal K^{\bullet}$ is
self-dual, this in turn reduces to prove that, for every
$x\in\Delta$ and every $\alpha\geq -m$, one has $\mathcal
H^\alpha(\mathcal K^{\bullet})_x=0$ \cite[Proposition 5.2.8., p.135,
and Remark 5.4.2., p. 156]{Dimca2}. This follows from (\ref{ddec}),
and Lemma \ref{lemmaabc}, (c).
\end{proof}

\medskip
\begin{proof}[Proof of Corollary \ref{ExIC}] It follows from
Theorem \ref{ExDecTh} and Lemma \ref{lemmaabc}, $(d)$, taking into
account that
$IH^\alpha(X)=\mathbb H^\alpha(IC_X^{\bullet}[-n])$.
\end{proof}

\bigskip
\section{Example: single condition Schubert varieties with two strata.}

Fix integers $i,j,k,l$ such that:
$$
0\leq i\leq j\leq l, \quad\quad 0\leq i\leq k\leq l, \quad\quad \min\{j,k\}=i+1.
$$
Let $F^j\subseteq \mathbb C^l$ denote a fixed $j$-dimensional
subspace, and let $\mathbb G_k(\mathbb C^l)$ denote the Grassmann
variety of $k$ planes in $\mathbb C^l$. Define
$$
\mathcal S:=\left\{V^k\in \mathbb G_k(\mathbb C^l): \dim V^k\cap
F^j\geq i\right\}.
$$
$\mathcal S$ is called a {\it single condition Schubert variety}
\cite[p. 328]{CGM}, and we say {\it with two strata} because
$\min\{j,k\}=i+1$ (see (\ref{flag}) below).

\smallskip
{\it Our aim is  to compute the Poincar\'e polynomial $IH_{\mathcal
S}(t)$ of the intersection cohomology of ${\mathcal S}$, using
Corollary \ref{ExIC} with $X={\mathcal S}$.}

\smallskip
To this purpose,  consider the map \cite[p. 328]{CGM}:
$$
\pi: \widetilde{\mathcal S}\to  \mathcal S,
$$
where
$$
\widetilde{\mathcal S}:=\left\{(W^i,V^k)\in \mathbb G_i(F^j)\times
\mathbb G_k(\mathbb C^l): W^i\subseteq V^k\right\},
\quad{\text{and}}\quad \pi(W^i,V^k)=V^k.
$$
The map $\pi$ is a resolution of singularities of $\mathcal S$. We
have:
$$
{\text{Sing}}(\mathcal S)=\left\{V^k\in \mathbb G_k(\mathbb C^l):
\dim V^k\cap F^j>i\right\}.
$$
Since $\min\{j,k\}=i+1$, it follows that:
\begin{equation}\label{dsmooth}
{\text{Sing}}(\mathcal S)\cong
\begin{cases}
\mathbb G_{k-j}(\mathbb C^{l-j})\quad{\text{if $i+1=j$}}\\
\mathbb G_k(\mathbb C^j)\quad\,\,\,\,\,  \quad{\text{if $i+1=k$}}.
\end{cases}
\end{equation}
Therefore, ${\text{Sing}}(\mathcal S)$ is nonsingular. Moreover,
$\pi$ induces an isomorphism
\begin{equation}\label{isom}
\pi^{-1}(\mathcal S\backslash{\text{Sing}}(\mathcal S))\cong
\mathcal S\backslash {\text{Sing}}(\mathcal S),
\end{equation}
and
$$
\pi^{-1}\left({\text{Sing}}(\mathcal S)\right)\to
{\text{Sing}}(\mathcal S)
$$
is a smooth fibration, with
\begin{equation}\label{Fiber}
\pi^{-1}(x)\cong \mathbb P^i
\end{equation}
for every $x\in {\text{Sing}}(\mathcal S)$. So, the flag
\begin{equation}\label{flag}
\mathcal S\supseteq {\text{Sing}}(\mathcal S)
\end{equation} is a
stratification of $\mathcal S$ adapted to $\pi$ \cite{CGM},
\cite{Williamson}. Observe that the natural projection:
$$
(W^i,V^k)\in \widetilde{\mathcal S}\to W^i\in \mathbb G_i(F^j)
$$
is a smooth fibration, with base space $\mathbb G_i(F^j)$ and fiber
$\mathbb G_{k-i}(\mathbb C^{l-i})$. Therefore, the Poincar\'e
polynomial $H_{\widetilde {\mathcal S}}(t)$ of the cohomology of
$\widetilde {\mathcal S}$ is (compare with (\ref{grass})):
\begin{equation}\label{des}
H_{\widetilde {\mathcal
S}}(t)=Q_i^jQ_{k-i}^{l-i}=\frac{P_j}{P_iP_{j-i}}\cdot
\frac{P_{l-i}}{P_{k-i}P_{l-k}}.
\end{equation}

\smallskip
The map $\pi$ is said a {\it small resolution} of $\mathcal S$ if
and only if, for every $x\in {\text{Sing}}(\mathcal S)$, one has
$$
\dim\pi^{-1}(x)<\frac{1}{2}\left(\dim \mathcal S-\dim
{\text{Sing}}(\mathcal S)\right).
$$
Since
$$
\dim \mathcal S=i(j-i)+(k-i)(l-k),
$$
and
$$
\dim \mathcal S-\dim{\text{Sing}}(\mathcal S)=2i+1+l-j-k,
$$
it follows that
$$
{\text{\it $\pi$ is a small resolution of $\mathcal S$ if and only
if}}\,\,\,\,\, l-j-k\geq 0.
$$
In this case, one knows that $IH_{\mathcal S}(t)$ is equal to the
Poincar\'e polynomial $H_{\widetilde {\mathcal S}}(t)$ of the
cohomology of $\widetilde {\mathcal S}$ \cite{CGM}, \cite{GMP2}:
$$
\pi \quad{\text{small}} \implies IH_{\mathcal S}(t)=H_{\widetilde
{\mathcal S}}(t).
$$
Hence, if $\pi$ is small, i.e. if $l-j-k\geq 0$, by (\ref{des}) we
get:
\begin{equation}\label{Cheeger}
IH_{\mathcal S}(t)=Q_i^jQ_{k-i}^{l-i}=\frac{P_j}{P_iP_{j-i}}\cdot
\frac{P_{l-i}}{P_{k-i}P_{l-k}}.
\end{equation}
This argument  appears in \cite[p. 329]{CGM} (see also \cite[p.
110-113]{Kirwan}). It applies only if $\pi$ is a small resolution,
bypassing the Decomposition Theorem.

\smallskip
When $\pi$ is non-small, we may apply our Corollary \ref{ExIC}. In
fact, if we set $ \Delta={\text{Sing}}(\mathcal S)$, then the map
$\pi: \widetilde{\mathcal S}\to  \mathcal S$ verifies all the
assumptions $(a1)$, $(a2)$, $(a3)$ stated in Notations, $(iii)$ (see
Lemma \ref{concl} below), and therefore, by Corollary \ref{ExIC}, we
get:
\begin{equation}\label{cors}
IH_{\mathcal S}(t)=H_{\widetilde {\mathcal S}}(t)-H_{\Delta}(t)g(t).
\end{equation}
In order to explicit this formula, we distinguish the
cases $i+1=j$ and $i+1=k$.

\medskip $\bullet$
In the case $i+1=j$,   comparing with  the invariants defined in
Notations, we have:

\medskip
1) $\Delta \cong \mathbb G_{k-j}(\mathbb C^{l-j})$ and, by
(\ref{Fiber}), $G\cong \mathbb P^{j-1}$;

2) $n=j-1+(k-j+1)(l-k)$;

3) $m=(l-k)(k-j)$;

4) $p=j-1$;

5) $q=l-k$;

6) $p-q=j+k-l-1$ and therefore
$$
\pi\quad {\text{is small}}\quad \iff \quad l-j-k\geq 0 \quad\iff
\quad p-q<0;
$$

7) $a^{\alpha}=\dim H^{\alpha}(\mathbb P^{j-1})$;

8) $g(t)=t^{2(l-k)}+t^{2(l-k+1)} +\dots+t^{2(j-1)}$.

\medskip
By (\ref{grass}), (\ref{des}), and (\ref{cors}), we deduce (recall
that $P_1=1$):
\begin{equation}\label{f1}
IH_{\mathcal
S}(t)=\frac{P_{j}}{P_{j-1}}\cdot\frac{P_{l-j+1}}{P_{k-j+1}P_{l-k}}-\left(t^{2(l-k)}+t^{2(l-k+1)}
+\dots+t^{2(j-1)}\right)\cdot\frac{P_{l-j}}{P_{k-j}P_{l-k}},
\end{equation}
where  $t^{2(l-k)}+t^{2(l-k+1)}+\dots+t^{2(j-1)}$ denotes the zero
polynomial when $j+k\leq l$ (compare with Notations, $(vii)$).
Hence, previous formula reduces to (\ref{Cheeger}) in the small
case.

\medskip $\bullet$
In the case $i+1=k$,   the invariants  are:

\medskip
1) $\Delta \cong \mathbb G_{k}(\mathbb C^{j})$ and, by
(\ref{Fiber}), $G\cong \mathbb P^{k-1}$;

2) $n=(k-1)(j-k+1)+l-k$;

3) $m=k(j-k)$;

4) $p=k-1$;

5) $q=l-j$;

6) $p-q=j+k-l-1$ and therefore
$$
\pi\quad {\text{is small}}\quad \iff \quad l-j-k\geq 0 \quad\iff
\quad p-q<0;
$$

7) $a^{\alpha}=\dim H^{\alpha}(\mathbb P^{k-1})$;

8) $g(t)=t^{2(l-j)}+t^{2(l-j+1)} +\dots+t^{2(k-1)}$.

\medskip
By (\ref{grass}), (\ref{des}), and (\ref{cors}), we deduce:
\begin{equation}\label{f2}
IH_{\mathcal S}(t)=
\frac{P_{j}}{P_{k-1}P_{j-k+1}}\cdot\frac{P_{l-k+1}}{P_{l-k}}-\left(t^{2(l-j)}+t^{2(l-j+1)}
+\dots+t^{2(k-1)}\right)\cdot\frac{P_{j}}{P_{k}P_{j-k}},
\end{equation}
where  $t^{2(l-j)}+t^{2(l-j+1)}+\dots+t^{2(k-1)}$ denotes the zero
polynomial when $j+k\leq l$, and previous formula reduces to
(\ref{Cheeger}) in the small case.

\medskip
\begin{lemma}\label{concl} The resolution of singularities
$\pi:\widetilde{\mathcal S}\to \mathcal S$, with $\Delta=
{\text{Sing}}(\mathcal S)$, verifies all the assumptions $(a1)$,
$(a2)$, $(a3)$ stated in Notations, $(iii)$.
\end{lemma}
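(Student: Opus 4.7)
The plan is to verify the three assumptions $(a1)$, $(a2)$, $(a3)$ in turn, treating the two cases $i+1=j$ and $i+1=k$ in parallel via the natural description of $\widetilde{\mathcal S}$ as a relative Grassmann bundle over $\mathbb G_i(F^j)$. Throughout, I would let $T$ denote the tautological rank-$i$ subbundle on $\mathbb G_i(F^j)$, so that $\widetilde{\mathcal S}$ may be identified with the relative Grassmannian of $(k-i)$-planes in $\mathbb C^l/T$; I denote by $\mathcal U$ and $\mathcal Q$ its universal subbundle and universal quotient, respectively.

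First I would dispatch $(a1)$: it is immediate from the choice $\Delta={\text{Sing}}(\mathcal S)$ together with the isomorphism (\ref{isom}).

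Next, for $(a2)$, I would exhibit $\rho\colon\widetilde\Delta\to\Delta$ as a projective bundle in both cases. When $i+1=j$, every $V^k\in\Delta$ contains $F^j$, so $\rho^{-1}(V^k)=\mathbb G_i(F^j)\cong\mathbb P^{j-1}$ is independent of $V^k$ and $\widetilde\Delta\cong\Delta\times\mathbb P^{j-1}$. When $i+1=k$, every $V^k\in\Delta$ is contained in $F^j$, so $\rho^{-1}(V^k)=\mathbb G_i(V^k)\cong\mathbb P^{k-1}$ and $\widetilde\Delta$ is the projectivization of the tautological subbundle on $\Delta=\mathbb G_k(F^j)$. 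In either case $\widetilde\Delta$ is smooth of pure dimension $m+p$, and the Leray--Hirsch theorem yields the required surjectivity of $H^{\alpha}(\widetilde\Delta)\to H^{\alpha}(G)$.

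The heart of the argument is $(a3)$. Since $G\cong\mathbb P^p$, its cohomology ring is $\mathbb Q[h]/(h^{p+1})$, and any nonzero class in $H^{2q}(G)$ is a nonzero multiple of $h^q$; cup product with such a class is then manifestly surjective on every graded piece of the relevant range. Hence $(a3)$ reduces to showing $c\neq 0$, or equivalently to verifying that $N|_G$ is a negative bundle on $G$ (matching Remark \ref{BG}). To compute $N|_G$, I would describe $\widetilde\Delta$ as the vanishing locus of a transverse section of an explicit rank-$q$ bundle on $\widetilde{\mathcal S}$: in the case $i+1=j$, of $(F^j/T)^{\vee}\otimes\mathcal Q$, expressing the condition $F^j\subseteq V^k$; in the case $i+1=k$, of $\mathcal U^{\vee}\otimes(\mathbb C^l/F^j)$, expressing the condition $V^k\subseteq F^j$. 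The expected outcome, in both cases, is the identification $N|_G\cong\mathcal O_{\mathbb P^p}(-1)^{\oplus q}$, which gives $c=(-h)^q\neq 0$. The main obstacle I anticipate is precisely this normal-bundle restriction computation in the case $i+1=k$: here $G$ sits transversally to the natural projection $\widetilde\Delta=\mathbb P(F^j/T)\to\mathbb G_i(F^j)$, and one must carefully track that $\mathcal O_{\mathbb P(F^j/T)}(1)$ restricts to $\mathcal O_{\mathbb P^{k-1}}(-1)$ on the fiber $G$ over $\Delta$.
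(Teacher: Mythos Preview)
Your proposal is correct and follows essentially the same route as the paper: both reduce $(a3)$ to the computation $N|_G\cong\mathcal O_{\mathbb P^p}(-1)^{\oplus q}$, yielding $c=(-h)^q\neq 0$. The only cosmetic difference is that the paper obtains $N$ from the exact sequence of relative tangent bundles (arriving at $N\cong\mathrm{Hom}(F^j/S'_{j-1},\mathbb C^l/S'_k)$ and $N\cong\mathrm{Hom}(S'_k/S'_{k-1},\mathbb C^q)$ in the two cases), whereas you obtain $N$ as the restriction of the bundle whose tautological section cuts out $\widetilde\Delta$; these are the same bundles in your notation $(F^j/T)^{\vee}\otimes\mathcal Q$ and $\mathcal U^{\vee}\otimes(\mathbb C^l/F^j)$, so the two arguments coincide.
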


\begin{proof} In view of the description of the map $\pi:\widetilde{\mathcal
S}\to \mathcal S$ given in (\ref{dsmooth}), (\ref{isom}), and
(\ref{Fiber}), we only have to verify the assumption $(a3)$.

First we examine the case $i+1=j$.

In this case, we have
$$
\widetilde {\mathcal S}= \left\{(W^{j-1},V^k)\in \mathbb
G_{j-1}(F^j)\times \mathbb G_k(\mathbb C^l): W^{j-1}\subseteq
V^k\right\},
$$
and
$$
\widetilde {\Delta}= \left\{(W^{j-1},V^k)\in \mathbb
G_{j-1}(F^j)\times \mathbb G_k(\mathbb C^l): F^j\subseteq
V^k\right\}.
$$
Let  $S_{j-1}$ denote the tautological bundle on $\mathbb
G_{j-1}(F^j)\cong \mathbb P^{j-1}$, and $S_{k}$ the tautological
bundle on $\mathbb G_{k}(\mathbb C^l)$. Let $S'_{j-1}$ and $S'_{k}$
denote the pull-back of $S_{j-1}$ and $S_{k}$ via the natural
projections $\widetilde{\Delta}\to \mathbb G_{j-1}(F^j)$ and
$\widetilde{\Delta}\to \mathbb G_{k}(\mathbb C^l)$. We have
identifications with Grassmann bundles \cite[p. 434,
B.5.7]{FultonIT}:
$$
\widetilde {\mathcal S}\cong \mathbb G_{k-j+1}(\mathbb C^l\slash
S_{j-1}) \quad {\text{and}}\quad\widetilde {\Delta}\cong \mathbb
G_{k-j}(\mathbb C^l\slash F^j),
$$
where $\mathbb C^l$ and $F^j$ denote the trivial vector bundles (in
this case, on $\mathbb G_{j-1}(F^j)$). The relative tangent bundles
are (compare with \cite[p. 435, B.5.8]{FultonIT}):
$$
\jmath^* T_{\widetilde {\mathcal S}\slash \mathbb G_{j-1}(F^j)}\cong
{\text{Hom}}(S'_k\slash S'_{j-1}, \mathbb C^l\slash S'_{k}),
$$
and
$$
T_{\widetilde {\Delta}\slash \mathbb G_{j-1}(F^j)}\cong
{\text{Hom}}(S'_k\slash F^j, \mathbb C^l\slash S'_{k}),
$$
where $\jmath$ denotes the inclusion $\widetilde
{\Delta}\hookrightarrow \widetilde {\mathcal S}$, and $\mathbb C^l$
and $F^j$ denote the trivial vector bundles  on $\widetilde
{\Delta}$.  Therefore, applying ${\text{Hom}}(\cdot\,,\mathbb
C^l\slash S'_{k})$ to the exact sequence
$$
0\to  F^j\slash S'_{j-1}\to S'_k\slash S'_{j-1}\to S'_k\slash F^j\to
0,
$$
we get the exact sequence:
$$
0\to T_{\widetilde {\Delta}\slash \mathbb G_{k-1}(F^j)}\to \jmath^*
T_{\widetilde {\mathcal S}\slash \mathbb G_{k-1}(F^j)}\to
{\text{Hom}}(F^j\slash S'_{j-1}, \mathbb C^l\slash S'_{k})\to 0.
$$
It enables us to identify the normal bundle $N$ of  $\widetilde
{\Delta}$ in $\widetilde {\mathcal S}$ \cite[p. 438,
B.7.2]{FultonIT}:
$$
N\cong {\text{Hom}}(F^j\slash S'_{j-1}, \mathbb C^l\slash S'_{k}).
$$
It follows that the restriction $N_{|G}$ of $N$ to the fiber $G\cong
\mathbb P^{j-1}$ of $\rho:\widetilde {\Delta}\hookrightarrow \Delta$
is:
$$
N_{|G}\cong \mathcal O_{G}(-1)\otimes \mathbb C^q.
$$
Hence:
$$
0\neq c=c_q(N_{|G})=(-h)^q\in H^{2q}(\mathbb P^{j-1})\cong
H^{2q}(G),
$$
where $h\in H^{2q}(\mathbb P^{j-1})$ denotes the hyperplane class.
This is enough to prove $(a3)$ because $G\cong \mathbb P^{j-1}$ is a
projective space.

\smallskip
Now we turn to the case $i+1=k$.

In this case, we have
$$
\widetilde {\mathcal S}= \left\{(W^{k-1},V^k)\in \mathbb
G_{k-1}(F^j)\times \mathbb G_k(\mathbb C^l): W^{k-1}\subseteq
V^k\right\},
$$
and
$$
\widetilde {\Delta}= \left\{(W^{k-1},V^k)\in \mathbb
G_{k-1}(F^j)\times \mathbb G_k(F^j): W^{k-1}\subseteq V^k\right\}.
$$
Let $S_{k-1}$ denote the tautological bundle on $\mathbb
G_{k-1}(F^j)$, and $S_{k}$ the tautological bundle on $\mathbb
G_{k}(F^j)$. Let $S'_{k-1}$ and $S'_{k}$ denote the pull-back of
$S_{k-1}$ and $S_{k}$ via the natural projection
$\widetilde{\Delta}\to \mathbb G_{k-1}(F^j)$. We have
identifications with projective bundles:
$$
\widetilde {\mathcal S}\cong \mathbb P(\mathbb C^l\slash S_{k-1}),
\quad {\text{and}}\quad \widetilde {\Delta}\cong \mathbb P(F^j\slash
S_{k-1}).
$$
The relative tangent bundles are \cite[p. 435, B.5.8]{FultonIT}:
$$
\jmath^* T_{\widetilde {\mathcal S}\slash \mathbb G_{k-1}(F^j)}\cong
{\text{Hom}}(S'_k\slash S'_{k-1}, \mathbb C^l\slash S'_{k}),
$$
and
$$
T_{\widetilde {\Delta}\slash \mathbb G_{k-1}(F^j)}\cong
{\text{Hom}}(S'_k\slash S'_{k-1}, F^j\slash S'_{k}),
$$
where $\jmath$ denotes the inclusion $\widetilde
{\Delta}\hookrightarrow \widetilde {\mathcal S}$. Therefore,
applying ${\text{Hom}}(S'_k\slash S'_{k-1}, \,\cdot)$ to the exact
sequence
$$
0\to F^j\slash S'_{k}\to \mathbb C^l\slash S'_{k}\to \mathbb C^q\to
0,
$$
we get the exact sequence:
$$
0\to T_{\widetilde {\Delta}\slash \mathbb G_{k-1}(F^j)}\to \jmath^*
T_{\widetilde {\mathcal S}\slash \mathbb G_{k-1}(F^j)}\to
{\text{Hom}}(S'_k\slash S'_{k-1}, \mathbb C^q)\to 0.
$$
Hence, the normal bundle $N$ of  $\widetilde {\Delta}$ in
$\widetilde {\mathcal S}$ \cite[p. 438, B.7.2]{FultonIT} is:
$$
N\cong {\text{Hom}}(S'_k\slash S'_{k-1}, \mathbb C^q).
$$
It follows that the restriction $N_{|G}$ of $N$ to the fiber $G\cong
\mathbb P^{k-1}$ of $\rho:\widetilde {\Delta}\hookrightarrow \Delta$
is:
$$
N_{|G}\cong \mathcal O_{G}(-1)\otimes \mathbb C^q.
$$
Hence:
$$
0\neq c=c_q(N_{|G})=(-h)^q\in H^{2q}(\mathbb P^{k-1})\cong
H^{2q}(G),
$$
where $h\in H^{2q}(\mathbb P^{k-1})$ denotes the hyperplane class.
\end{proof}

\medskip
\begin{remark}\label{concl2} $(i)$ Another resolution of $\mathcal S$ is
given by
$$
\pi_1: (V^k,U^{k+j-i})\in\widetilde{\mathcal S_1}\to  V^k\in\mathcal
S,
$$
where
$$
\widetilde{\mathcal S_1}:=\left\{(V^k,U^{k+j-i})\in \mathbb
G_k(\mathbb C^l)\times \mathbb G_{k+j-i}(\mathbb C^l):
V^k+F^j\subseteq U^{k+j-i}\right\}.
$$
A similar argument as before shows that
$$
{\text{\it $\pi_1$ is a small resolution of $\mathcal S$ if and only
if}}\,\,\,\,\, l-j-k\leq 0,
$$
and,  in this case, we have:
\begin{equation}\label{f3}
IH_{\mathcal S}(t)= H_{\widetilde {\mathcal
S_1}}(t)=Q_{k-i}^{l-j}Q_{k}^{k+j-i}=\frac{P_{l-j}}{P_{k-i}P_{l-j-k+i}}\cdot
\frac{P_{k+j-i}}{P_{k}P_{j-i}}. \end{equation} This  is another way
to compute $IH_{\mathcal S}(t)$ when $\pi$ is non-small, relying on
the same argument as in  \cite{CGM}.

\smallskip $(ii)$ Comparing (\ref{f1}), (\ref{f2}) and (\ref{f3}),
in the case $l\leq j+k$ we obtain the following polynomial
identities, that one may easily verify with a direct computation:

\medskip
if $i+1=j$ then:
$$
\frac{P_{l-j}}{P_{k-j+1}P_{l-k-1}}\cdot \frac{P_{k+1}}{P_{k}}
\quad\quad\quad\quad\quad\quad\quad\quad\quad\quad\quad\quad\quad
$$
$$
=\frac{P_{j}}{P_{j-1}}\cdot\frac{P_{l-j+1}}{P_{k-j+1}P_{l-k}}-\left(t^{2(l-k)}+t^{2(l-k+1)}
+\dots+t^{2(j-1)}\right)\cdot\frac{P_{l-j}}{P_{k-j}P_{l-k}};
$$

\medskip
if $i+1=k$ then:
$$
\frac{P_{l-j}}{P_{1}P_{l-j-1}}\cdot \frac{P_{j+1}}{P_{k}P_{j-k+1}}
\quad\quad\quad\quad\quad\quad\quad\quad\quad\quad\quad\quad\quad
$$
$$
=\frac{P_{j}}{P_{k-1}P_{j-k+1}}\cdot\frac{P_{l-k+1}}{P_{l-k}}-\left(t^{2(l-j)}+t^{2(l-j+1)}
+\dots+t^{2(k-1)}\right)\cdot\frac{P_{j}}{P_{k}P_{j-k}}.
$$
\end{remark}

\bigskip
\section{Example: hypersurfaces of $\mathbb P^5$ with one-dimensional singular locus}

\bigskip
Fix a smooth threefold  $T\subset \mathbb P^5$, complete
intersection, with equations $t_1=t_2=0$. Let $X\subset \mathbb P^5$
be a general hypersurface containing $T$, with equation
$t_1t_3-t_2t_4=0$.  By Bertini's theorem, the singular locus of $X$
is contained in $T$. Actually, since $T$ is smooth,
${\text{Sing}}(X)$ is equal to the smooth complete intersection
curve $\Delta$, defined by $t_1=t_2=t_3=t_4=0$. Set:
$$
d_i:=\deg t_i,\quad x:=\deg X=d_1+d_3=d_2+d_4,\quad
\delta:=\deg\Delta=d_1d_2d_3d_4.
$$
Observe that:
$$
\mathcal O_{\Delta}(K_{\Delta})\cong \mathcal O_{\Delta}(2x-6),\quad
2g-2=(2x-6)\delta,
$$
where $K_{\Delta}$ denotes the canonical divisor of $\Delta$, and
$g$ the genus.

\smallskip
Let $\sigma:\mathbb P\to \mathbb P^5$ be the blowing-up of $\mathbb
P^5$ along $\Delta$. Let $E\cong \Delta\times \mathbb P^3\subset
\mathbb P$ be the exceptional divisor. Let $\widetilde X\subset
\mathbb P$ be the strict transform of $X$, which is the blowing-up
of $X$ along $\Delta$. The restriction of $\sigma$ to $\widetilde
X$:
$$
\pi:\widetilde X\to X
$$
is a resolution of singularities of $X$. The exceptional divisor
$\widetilde \Delta$ of $\widetilde X$ is:
$$
\widetilde \Delta\cong \Delta \times G,
$$
where $G$ is the smooth quadric surface in $\mathbb P^3$. The
resolution $\pi$ verifies all the assumptions $(a1)$, $(a2)$,
$(a3)$, and therefore, by Corollary \ref{ExIC}, we get:
$$
IH_{X}(t)=H_{\widetilde {X}}(t)-H_{\Delta}(t)g(t).
$$
In order to explicit this formula, first we notice that, in this
example, the invariants are: $n=4$, $m=1$, $p=2$, $q=1$. So, we
have:
$$
H_{\Delta}(t)=1+2gt+t^2, \quad g(t)=t^2+t^4.
$$
It remains to compute $H_{\widetilde {X}}(t)$, i.e. the Betti
numbers $b_i(\widetilde X)$ of $\widetilde X$.

\smallskip
To this purpose, we recall some properties of $\mathbb P$, which we
will use in the sequel. We refer to \cite[p. 605]{GH}, \cite[p. 592,
Lemma 1.4 and Proof of Theorem 1.2]{BEL}, and \cite[p. 67, Example
3.3.4]{FultonIT} for more details. Set $\mathcal O_{\mathbb
P}(H)=\sigma^*\mathcal O_{\mathbb P^5}(1)$. We have:
\begin{equation}\label{adjj}
\mathcal O_{\mathbb P}(K_{\mathbb P})\cong \mathcal O_{\mathbb
P}(-6H+3E), \quad \mathcal O_{\mathbb P}(\widetilde X)\cong \mathcal
O_{\mathbb P}(xH-2E),
\end{equation}
and so
\begin{equation}\label{adj}
\mathcal O_{\mathbb P}(K_{\mathbb P}+\widetilde X)\cong \mathcal
O_{\mathbb P}((x-6)H+E), \quad \mathcal O_{\widetilde X
}(K_{\widetilde X})\cong \mathcal O_{\mathbb P}((x-6)H+E)\otimes
\mathcal O_{\widetilde X}.
\end{equation}

We also have:
\begin{equation}\label{int}
H^5=1, \quad H^4E=H^3E^2=H^2E^3=0,\quad HE^4=-\delta,
\end{equation}
$$
E^5=-c_1(N_{\Delta,\mathbb P^5})=2-2g-6\delta
$$
($N_{\Delta,\mathbb P^5}$ denotes the normal bundle of $\Delta$ in
$\mathbb P^5$).

Moreover:
\begin{equation}\label{BELuno}
H^{\alpha}(\mathbb P, \sigma^*M\otimes \mathcal O_{\mathbb
P}(iE))\cong H^{\alpha}(\mathbb P^5, M)
\end{equation}
for every vector bundle $M$ on $\mathbb P^5$,  every $\alpha$, and
every $0\leq i\leq 3$, and
\begin{equation}\label{BELdue}
H^{\alpha}(\mathbb P, \mathcal O_{\mathbb P}(iH-E))\cong
H^{\alpha}(\mathbb P^5, \mathcal I_{\Delta,\mathbb P^5}(i)),
\end{equation}
for every $\alpha$ and every $i$ ($\mathcal I_{\Delta,\mathbb P^5}$
denotes the ideal sheaf of $\Delta$ in $\mathbb P^5$).

\smallskip
We are in position to compute the Betti numbers of $\widetilde X$.

\begin{lemma} \label{Betti}
$$
b_1(\widetilde X)=0, \quad b_2(\widetilde X)=3, \quad b_3(\widetilde
X)=4g,
$$
$$
b_4(\widetilde X)=(x-2)(x^2-3x+3)(x^2-x+1)-(g-1)+3(2-\delta).
$$
\end{lemma}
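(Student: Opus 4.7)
The plan is to combine the $E_2$-degeneration of the Leray spectral sequence for $\pi$ (a consequence of Theorem~\ref{ExDecTh}) with the Lefschetz theorem for the singular hypersurface $X\subset\mathbb{P}^5$ to obtain $b_1,b_2,b_3$; then a separate Chern-number computation on the smooth projective fourfold $\widetilde{X}\subset\mathbb{P}$ is needed for $b_4$.

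For the first three Betti numbers I would observe that, since $\pi$ is an isomorphism off $\Delta$ and every fiber over $\Delta$ is the smooth quadric $G\cong\mathbb{P}^1\times\mathbb{P}^1$ (with Poincar\'e polynomial $1+2t^2+t^4$), the higher direct images are $R^0\pi_*\mathbb{Q}\cong\mathbb{Q}_X$, $R^2\pi_*\mathbb{Q}\cong\imath_*\mathbb{Q}_\Delta^{\oplus 2}$, $R^4\pi_*\mathbb{Q}\cong\imath_*\mathbb{Q}_\Delta$, and the rest vanish. By Theorem~\ref{ExDecTh} the Leray spectral sequence of $\pi$ degenerates at $E_2$, giving
$$
H^k(\widetilde{X})\;\cong\;H^k(X)\,\oplus\,H^{k-2}(\Delta)^{\oplus 2}\,\oplus\,H^{k-4}(\Delta).\qquad(\ast)
$$
The cohomology of $X$ in low degree is pinned down by Lefschetz: since $\mathbb{P}^5\setminus X$ is affine of complex dimension $5$, Andreotti--Frankel combined with Poincar\'e duality for the open complement yields $H^k(\mathbb{P}^5,X;\mathbb{Q})=0$ for $k\leq 4$, and the long exact sequence of the pair then forces $H^k(X)\cong H^k(\mathbb{P}^5)$ for $k\leq 3$. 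Substituting into $(\ast)$ gives $b_1=0$, $b_2=1+2=3$, $b_3=2\cdot 2g=4g$.

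For $b_4$ the Lefschetz argument no longer determines $H^4(X)$, so I would switch to a topological Euler-characteristic computation. Poincar\'e duality on the smooth projective fourfold $\widetilde{X}$ gives
$$
b_4(\widetilde{X})\;=\;\chi(\widetilde{X})-2(b_0-b_1+b_2-b_3)\;=\;\chi(\widetilde{X})-8+8g,
$$
so it suffices to evaluate $\chi(\widetilde{X})=\int_{\widetilde{X}}c_4(T_{\widetilde{X}})$. By adjunction, and using (\ref{adjj}),
$$
c(T_{\widetilde{X}})\;=\;\frac{c(T_{\mathbb{P}})|_{\widetilde{X}}}{1+(xH-2E)|_{\widetilde{X}}},
$$
and $c(T_{\mathbb{P}})$ is expressed in terms of $\sigma^{*}c(T_{\mathbb{P}^5})=(1+H)^6$, $E$, and the normal bundle $N_{\Delta/\mathbb{P}^5}$ via the standard blowup formula. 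Pushing down to $\mathbb{P}$ and intersecting with $[\widetilde{X}]=xH-2E$ reduces the integral to a sum of monomials $H^{a}E^{b}$ with $a+b=5$, each of which is tabulated in (\ref{int}). The outcome is $\chi(\widetilde{X})$ as a polynomial in $x$, $g$, $\delta$, from which $b_4$ emerges in the stated form.

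The main obstacle is this final Chern-class integration: one must carefully implement the blowup formula for $c(T_\mathbb{P})$, track the exceptional corrections supported on $E$, and expand $1/(1+xH-2E)$ up to degree $4$, before applying (\ref{int}) monomial by monomial. Everything else is essentially mechanical once $(\ast)$ and the Lefschetz input on $X$ are in place.
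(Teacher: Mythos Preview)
The gap is in your derivation of $(\ast)$. Theorem~\ref{ExDecTh} gives
$$
R\pi_*\mathbb Q_{\widetilde X}[4]\;\cong\; IC_X^\bullet\oplus R\imath_*\mathbb Q_\Delta[2]\oplus R\imath_*\mathbb Q_\Delta,
$$
hence $H^k(\widetilde X)\cong IH^k(X)\oplus H^{k-2}(\Delta)\oplus H^{k-4}(\Delta)$, which is \emph{not} your formula $(\ast)$. The two agree only if $IC_X^\bullet[-4]\cong\mathbb Q_X\oplus\imath_*\mathbb Q_\Delta[-2]$ in $D^b(X)$, i.e.\ if the intersection complex itself splits as the sum of its cohomology sheaves; Theorem~\ref{ExDecTh} does not assert this, and it amounts to the vanishing of a connecting class in $\operatorname{Hom}_{D^b(X)}(\imath_*\mathbb Q_\Delta,\mathbb Q_X[3])$ for which you give no argument.

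Concretely, your computation of $b_1$ and $b_2$ survives because every Leray differential in total degree $\leq 2$ lands in $H^3(X)=0$. But for $b_3$ the relevant differential is $d_3:H^1(\Delta)^{\oplus 2}\to H^4(X)$, and nothing you wrote forces it to vanish; its vanishing is \emph{equivalent} to the equality $b_3(\widetilde X)=4g$ you are trying to prove. The paper does not use $(\ast)$: it first gets $b_3(\widetilde X)\leq 4g$ from the surjectivity of $H_3(\widetilde\Delta)\to H_3(\widetilde X)$ (via the long exact sequences of the pairs $(\widetilde X,\widetilde\Delta)$ and $(X,\Delta)$ together with Lefschetz on $X$), and then proves the reverse inequality $h^{1,2}(\widetilde X)\geq 2g$ through a substantial sheaf-cohomology computation on $\mathbb P$ using the conormal sequence of $\widetilde X\subset\mathbb P$, Serre duality, and the vanishing statements (\ref{BELuno})--(\ref{BELdue}). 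That lower bound is the real content of the lemma in degree~$3$, and your outline supplies no substitute for it. Your treatment of $b_4$ via $\chi(\widetilde X)=\int c_4(T_{\widetilde X})$ and the blowup Chern-class formula is, by contrast, exactly the paper's approach.
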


\begin{corollary}\label{new}
$$
IH_{X}(t)=1+2t^2+2gt^3
\quad\quad\quad\quad\quad\quad\quad\quad\quad\quad\quad\quad
$$
$$
+[(x-2)(x^2-3x+3)(x^2-x+1)-(g-1)+(4-3\delta)]t^4+2gt^5+2t^6+t^8.
$$
\end{corollary}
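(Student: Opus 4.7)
The plan is to apply Corollary \ref{ExIC} directly, since all the ingredients have already been prepared in the discussion preceding the statement. We have $H_{\Delta}(t)=1+2gt+t^2$ (because $\Delta$ is a smooth projective curve of genus $g$) and $g(t)=t^2+t^4$ (from the invariants $n=4$, $m=1$, $p=2$, $q=1$, since $G$ is the smooth quadric surface and $a^{0}=a^{4}=1$ with $p-q=1$). Thus
$$
H_{\Delta}(t)\,g(t)=(1+2gt+t^2)(t^2+t^4)=t^2+2gt^3+2t^4+2gt^5+t^6.
$$

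Next, I would assemble $H_{\widetilde X}(t)$ from Lemma \ref{Betti}. Since $\widetilde X$ is smooth, projective, connected, and of complex dimension $4$, Poincar\'e duality gives $b_0=b_8=1$, $b_1=b_7$, $b_2=b_6$, and $b_3=b_5$. Combining this with the values $b_1=0$, $b_2=3$, $b_3=4g$, and $b_4=(x-2)(x^2-3x+3)(x^2-x+1)-(g-1)+3(2-\delta)$ supplied by Lemma \ref{Betti}, one obtains
$$
H_{\widetilde X}(t)=1+3t^2+4gt^3+b_4\,t^4+4gt^5+3t^6+t^8.
$$

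Finally, I would subtract term by term:
$$
IH_{X}(t)=H_{\widetilde X}(t)-H_{\Delta}(t)\,g(t)=1+2t^2+2gt^3+(b_4-2)t^4+2gt^5+2t^6+t^8,
$$
and simplify $b_4-2=(x-2)(x^2-3x+3)(x^2-x+1)-(g-1)+4-3\delta$ to recover the claimed expression. There is no real obstacle here: once Corollary \ref{ExIC} is available, Corollary \ref{new} is a purely mechanical consequence of Lemma \ref{Betti} plus Poincar\'e duality on the smooth fourfold $\widetilde X$. The genuine work, namely the verification that $\pi$ satisfies $(a1),(a2),(a3)$ and the computation of the Betti numbers of $\widetilde X$ via the intersection numbers \eqref{int} and the vanishings \eqref{BELuno}--\eqref{BELdue}, has already been carried out in the surrounding discussion and in Lemma \ref{Betti}.
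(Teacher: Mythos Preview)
Your proof is correct and is exactly the argument the paper intends: the corollary is stated immediately after Lemma~\ref{Betti} with no separate proof, since it follows by plugging the values $H_{\Delta}(t)=1+2gt+t^2$, $g(t)=t^2+t^4$, and the Betti numbers from Lemma~\ref{Betti} (together with Poincar\'e duality on the smooth fourfold $\widetilde X$) into Corollary~\ref{ExIC}. Your arithmetic, including $b_4-2=(x-2)(x^2-3x+3)(x^2-x+1)-(g-1)+(4-3\delta)$, checks out.
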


\begin{proof}[Proof of the Lemma \ref{Betti}] For every $\alpha\in\mathbb Z$, consider the following natural commutative diagram:
$$
\begin{array}{ccccccc}
H_{\alpha+1}(\widetilde X, \widetilde \Delta)&\stackrel
{}{\longrightarrow}&H_{\alpha}(\widetilde
\Delta)&\stackrel{}{\longrightarrow}
&H_{\alpha}(\widetilde X)&\stackrel {}{\longrightarrow}&H_{\alpha}(\widetilde X, \widetilde \Delta)\\
\stackrel {}{}\Vert & &\stackrel {}{}\downarrow & & \stackrel
{}{}\downarrow& &
\stackrel {}{}\Vert \\
H_{\alpha+1}(X,\Delta)&\stackrel
{}{\longrightarrow}&H_{\alpha}(\Delta)&\stackrel{}{\longrightarrow}
&H_{\alpha}(X)&\stackrel {}{\longrightarrow}&H_{\alpha}(X,\Delta),
\end{array}
$$
where the horizontal rows are the homology exact sequences of the
couple, and the vertical maps are induced by $\pi$. As for the
isomorphism $H_{*}(\widetilde X, \widetilde \Delta)\cong H_{*}(X,
\Delta)$, see \cite[p. 23]{Lamotke}. By the Lefschetz Hyperplane
Theorem, we know that
$$
b_1(X)=b_3(X)=0,\quad b_2(X)=1.
$$
Combining with the K\"unneth formula for $\widetilde \Delta\cong
\Delta\times G$, by a simple diagram chase we deduce:

$\bullet$ $b_1(\widetilde X)=0$;

$\bullet$ the push-forward $H_2(\widetilde \Delta)\to H_2(\widetilde
X)$ is an isomorphism, therefore $b_2(\widetilde X)=b_2(\widetilde
\Delta)=3$;

$\bullet$ the push-forward $H_3(\widetilde \Delta)\to H_3(\widetilde
X)$ is onto.

\medskip
In particular, the pull-back $H^3(\widetilde X)\to H^3(\widetilde
\Delta)$ is injective. Since $$H^3(\widetilde\Delta)\cong
H^1(\Delta)\otimes H^2(G)\cong H^{1,2}(\widetilde\Delta)\oplus
H^{2,1}(\widetilde\Delta)\cong \mathbb C^{4g},$$ it follows that
$H^3(\widetilde X)=H^{1,2}(\widetilde X)\oplus H^{2,1}(\widetilde
X)$. Hence, in order to prove that $b_3(\widetilde X)=4g$, it
suffices to prove that
\begin{equation}\label{red}
h^{1,2}(\widetilde X)\geq 2g.
\end{equation}
To this aim, let $$N_{\widetilde X,\mathbb P}\cong \mathcal
O_{\mathbb P}(\widetilde X)\otimes \mathcal O_{\widetilde X}$$ be
the normal bundle of $\widetilde X$ in $\mathbb P$. From the natural
exact sequence
$$
0\to N_{\widetilde X,\mathbb P}^{\vee}\to \Omega_{\mathbb P}\otimes
\mathcal O_{\widetilde X}\to \Omega_{\widetilde X}\to 0,
$$
we get the following exact sequence:
\begin{equation}\label{esuno}
H^1(\widetilde X, \Omega_{\widetilde X})\to H^2(\widetilde X,
N_{\widetilde X,\mathbb P}^{\vee})\to H^2(\widetilde X,
\Omega_{\mathbb P}\otimes \mathcal O_{\widetilde X})\to
H^2(\widetilde X, \Omega_{\widetilde X}).
\end{equation}
In order to identify the first map $H^1(\widetilde X,
\Omega_{\widetilde X})\to H^2(\widetilde X, N_{\widetilde X,\mathbb
P}^{\vee})$, first notice that
\begin{equation}\label{isouno}
H^1(\widetilde X, \Omega_{\widetilde X})\cong H^2(\widetilde \Delta)
\end{equation}
because $$H^2(\widetilde X)\cong H^2(\widetilde \Delta)\cong
(H^0(\Delta)\otimes H^2(G))\oplus (H^2(\Delta)\otimes H^0(G))\cong
H^{1,1}(\widetilde \Delta).$$ On the other hand, by the Serre
Duality Theorem and (\ref{adj}), we have:
$$
H^2(\widetilde X, N_{\widetilde X,\mathbb P}^{\vee})\cong
H^2(\widetilde X, \mathcal O_{\mathbb P}((2x-6)H-E)\otimes \mathcal
O_{\widetilde X})^{\vee}.
$$
Tensoring the exact sequence
$$
0\to \mathcal O_{\mathbb P}(-\widetilde X)\to \mathcal O_{\mathbb
P}\to \mathcal O_{\widetilde X}\to 0
$$
with $\mathcal O_{\mathbb P}((2x-6)H-E)$, we get the exact sequence
$$
H^2(\mathbb P, \mathcal O_{\mathbb P}((x-6)H+E))\to H^2(\mathbb P,
\mathcal O_{\mathbb P}((2x-6)H-E))\to
$$
$$
\to H^2(\widetilde X, \mathcal O_{\mathbb P}((2x-6)H-E)\otimes
\mathcal O_{\widetilde X})\to H^3(\mathbb P, \mathcal O_{\mathbb
P}((x-6)H+E)).
$$
Now by (\ref{BELuno}) and (\ref{BELdue}) we have:
$$
H^2(\mathbb P, \mathcal O_{\mathbb P}((x-6)H+E))\cong H^2(\mathbb
P^5, \mathcal O_{\mathbb P^5}(x-6))=0,
$$
$$
H^3(\mathbb P, \mathcal O_{\mathbb P}((x-6)H+E))\cong H^3(\mathbb
P^5, \mathcal O_{\mathbb P^5}(x-6))=0,
$$
and
$$
H^2(\mathbb P, \mathcal O_{\mathbb P}((2x-6)H-E))\cong H^2(\mathbb
P^5, \mathcal I_{\Delta,\mathbb P^5}(2x-6))\cong
$$
$$
\cong H^1(\Delta, \mathcal O_{\Delta}(2x-6))\cong H^0(\Delta,
\mathcal O_{\Delta})^{\vee},
$$
because $\mathcal O_{\Delta}(2x-6)\cong \mathcal
O_{\Delta}(K_{\Delta})$. Summing up, we get
\begin{equation}\label{isodue}
H^2(\widetilde X, N_{\widetilde X,\mathbb P}^{\vee})\cong
 H^0(\Delta,
\mathcal O_{\Delta})\cong H^0(\Delta)\cong \mathbb C.
\end{equation}
By (\ref{isouno}) and (\ref{isodue}), it follows that  the map
$H^1(\widetilde X, \Omega_{\widetilde X})\to H^2(\widetilde X,
N_{\widetilde X,\mathbb P}^{\vee})$ identifies with the surjective
projection $H^2(\widetilde \Delta)\to H^0(\Delta)$ given by the
K\"unneth formula. By (\ref{esuno}), it follows an injective map
$$
0\to H^2(\widetilde X, \Omega_{\mathbb P}\otimes \mathcal
O_{\widetilde X})\to H^2(\widetilde X, \Omega_{\widetilde X}).
$$
Hence, by (\ref{red}), to prove that $b_3(\widetilde X)=4g$, it
suffices to prove that
\begin{equation}\label{reduno}
\dim H^2(\widetilde X, \Omega_{\mathbb P}\otimes \mathcal
O_{\widetilde X})\geq 2g. \end{equation} To this aim, consider again
the exact sequence
$$
0\to \mathcal O_{\mathbb P}(-\widetilde X)\to \mathcal O_{\mathbb
P}\to \mathcal O_{\widetilde X}\to 0.
$$
Tensoring with  $\Omega_{\mathbb P}$, and taking the cohomology, we
get the exact sequence:
$$
H^2(\mathbb P, \Omega_{\mathbb P}\otimes \mathcal O_{\mathbb P
}(-\widetilde X))\to H^2(\mathbb P, \Omega_{\mathbb P})\to
H^2(\widetilde X, \Omega_{\mathbb P}\otimes \mathcal O_{\widetilde
X}).
$$
Since $\dim H^2(\mathbb P, \Omega_{\mathbb P})=2g$ \cite[p. 180,
Theorem 7.31]{Voisin}, to prove (\ref{reduno}) (hence (\ref{red})),
it is enough to prove that
$$
H^2(\mathbb P, \Omega_{\mathbb P}\otimes \mathcal O_{\mathbb P
}(-\widetilde X))=0,
$$
i.e., by the Serre Duality Theorem, that
\begin{equation}\label{reddue}
H^3(\mathbb P, \mathcal T_{\mathbb P}\otimes \mathcal O_{\mathbb P
}(K_{\mathbb P}+\widetilde X))=0. \end{equation} Consider the exact
sequence \cite[p. 299]{FultonIT}
$$
0\to \mathcal T_{\mathbb P}\to \sigma^*\mathcal T_{\mathbb P^5}\to
j_*(F)\to 0,
$$
where $j: E\to \mathbb P$  denotes the inclusion, and $F$ the
universal quotient bundle on $E$. Tensoring with $\mathcal
O_{\mathbb P}(K_{\mathbb P}+\widetilde X)$, and taking the
cohomology, we get the exact sequence:
$$
H^2(\mathbb P, \sigma^*\mathcal T_{\mathbb P^5}\otimes \mathcal
O_{\mathbb P}(K_{\mathbb P}+\widetilde X))\to H^2(\mathbb P,
j_*(F)\otimes \mathcal O_{\mathbb P}(K_{\mathbb P}+\widetilde X))\to
$$
$$
\to H^3(\mathbb P, \mathcal T_{\mathbb P}\otimes \mathcal O_{\mathbb
P }(K_{\mathbb P}+\widetilde X))\to H^3(\mathbb P, \sigma^*\mathcal
T_{\mathbb P^5}\otimes \mathcal O_{\mathbb P}(K_{\mathbb
P}+\widetilde X)).
$$
By (\ref{adj}) and (\ref{BELuno}) we have:
$$
H^2(\mathbb P, \sigma^*\mathcal T_{\mathbb P^5}\otimes \mathcal
O_{\mathbb P}(K_{\mathbb P}+\widetilde X))\cong H^2(\mathbb P^5,
\mathcal T_{\mathbb P^5}\otimes \mathcal O_{\mathbb P^5}(x-6))=0,
$$
$$
H^3(\mathbb P, \sigma^*\mathcal T_{\mathbb P^5}\otimes \mathcal
O_{\mathbb P}(K_{\mathbb P}+\widetilde X))\cong H^3(\mathbb P^5,
\mathcal T_{\mathbb P^5}\otimes \mathcal O_{\mathbb P^5}(x-6))=0,
$$
and, by the projection formula, we have
$$
H^2(\mathbb P, j_*(F)\otimes \mathcal O_{\mathbb P}(K_{\mathbb
P}+\widetilde X)) \cong H^2(E, F\otimes j^*(\mathcal O_{\mathbb
P}(K_{\mathbb P}+\widetilde X))).
$$
It follows that
$$
H^3(\mathbb P, \mathcal T_{\mathbb P}\otimes \mathcal O_{\mathbb P
}(K_{\mathbb P}+\widetilde X))\cong  H^2(E, F\otimes j^*(\mathcal
O_{\mathbb P}(K_{\mathbb P}+\widetilde X))).
$$
So, in order to prove (\ref{reddue}), it suffices to prove that
\begin{equation}\label{redtre}
 H^2(E, F\otimes j^*(\mathcal O_{\mathbb P}(K_{\mathbb P}+\widetilde X)))=0.
\end{equation}
Consider the exact sequence \cite[loc. cit.]{FultonIT}
$$
0\to N_{E,\mathbb P}\to \tau^* N_{\Delta,\mathbb P^5}\to F\to 0,
$$
where $N_{E,\mathbb P}\cong \mathcal O_E\otimes \mathcal O_{\mathbb
P}(E)$ is the normal bundle of $E$ in $\mathbb P$, $\tau:E\to
\Delta$ is the natural projection, and $N_{\Delta,\mathbb P^5}\cong
\mathcal \oplus_{i=1}^4\mathcal O_{\Delta}(d_i)$ is the normal
bundle of $\Delta$ in $\mathbb P^5$. Tensoring with $j^*(\mathcal
O_{\mathbb P}(K_{\mathbb P}+\widetilde X))$, and taking into account
(\ref{adj}), we deduce that the proof of the  vanishing
(\ref{redtre}) (hence the proof of (\ref{red})) amounts to show
that:
\begin{equation}\label{ultimo}
H^2(E, j^*(\mathcal O_{\mathbb P}((d_i+x-6)H+E)))= H^3(E,
j^*(\mathcal O_{\mathbb P}((x-6)H+2E)))=0,
\end{equation}
for every $i=1,2,3,4$. Consider the exact sequence:
$$
0\to \mathcal O_{\mathbb P}(-E)\to \mathcal O_{\mathbb P}\to
\mathcal O_{E}\to 0.
$$
Tensoring with $\mathcal O_{\mathbb P}((d_i+x-6)H+E)$, we have the
exact sequence:
$$
H^2(\mathbb P, \mathcal O_{\mathbb P}((d_i+x-6)H+E))\to H^2(E,
j^*(\mathcal O_{\mathbb P}((d_i+x-6)H+E)))\to
$$
$$
\to H^3(\mathbb P, \mathcal O_{\mathbb P}((d_i+x-6)H)),
$$
and tensoring with $\mathcal O_{\mathbb P}((x-6)H+2E)$, we have the
exact sequence:
$$
H^3(\mathbb P, \mathcal O_{\mathbb P}((x-6)H+2E))\to H^3(E,
j^*(\mathcal O_{\mathbb P}((x-6)H+2E))) \to
$$
$$\to
H^4(\mathbb P, \mathcal O_{\mathbb P}((x-6)H+E)).
$$
By (\ref{BELuno}) we have:
$$
H^2(\mathbb P, \mathcal O_{\mathbb P}((d_i+x-6)H+E))\cong
H^2(\mathbb P^5, \mathcal O_{\mathbb P^5}(d_i+x-6))=0,
$$
$$
H^3(\mathbb P, \mathcal O_{\mathbb P}((d_i+x-6)H))\cong H^3(\mathbb
P^5, \mathcal O_{\mathbb P^5}(d_i+x-6))=0,
$$
$$
H^3(\mathbb P, \mathcal O_{\mathbb P}((x-6)H+2E))\cong H^3(\mathbb
P^5, \mathcal O_{\mathbb P^5}(x-6))=0,
$$
$$
H^4(\mathbb P, \mathcal O_{\mathbb P}((x-6)H+E))\cong H^4(\mathbb
P^5, \mathcal O_{\mathbb P^5}(x-6))=0.
$$
This proves the vanishing (\ref{ultimo}), and concludes the proof of
the equality $b_3(\widetilde X)=4g$.

\smallskip
Now we turn to $b_4(\widetilde X)$.

By the Gauss-Bonnet Formula \cite[p. 416]{GH}, we know that
$$
c_4(\mathcal T_{\widetilde X})=\chi_{\text{top}}(\widetilde X).
$$
Therefore, by the previous computations of $b_i(\widetilde X)$,
$i=1,2,3$, we have:
$$
b_4(\widetilde X)=c_4(\mathcal T_{\widetilde X})+4(2g-2).
$$
Hence, the computation of $b_4(\widetilde X)$ amounts to that of
$c_4(\mathcal T_{\widetilde X})$. By the exact sequence:
$$
0\to \mathcal T_{\widetilde X}\to \mathcal T_{\mathbb P}\otimes
\mathcal O_{\widetilde X}\to N_{\widetilde X,\mathbb P}\to 0,
$$
we get
\begin{equation}\label{ultimaf}
c_4(\mathcal T_{\widetilde X})=\widetilde X\cdot c_4(\mathcal
T_{\mathbb P})-\widetilde X^2\cdot c_3(\mathcal T_{\mathbb P}) +
\widetilde X^3\cdot c_2(\mathcal T_{\mathbb P})-\widetilde X^4\cdot
c_1(\mathcal T_{\mathbb P})+\widetilde X^5.
\end{equation}
On the other hand, using \cite[p. 300, Example 15.4.2]{FultonIT}, we
find:
$$
c_1(\mathcal T_{\mathbb P})=6H-3E,\quad c_2(\mathcal T_{\mathbb
P})=15H^2+2(x-9)HE+2E^2,
$$
$$
c_3(\mathcal T_{\mathbb P})=20H^3+8x(x-3)H^2E+4(3-x)HE^2+2E^3,
$$
$$
c_4(\mathcal T_{\mathbb P})=15H^4+12HE^3-3E^4.
$$
Inserting previous data into (\ref{ultimaf}), and taking into
account (\ref{adjj}) and (\ref{int}), we get:
$$
c_4(\mathcal T_{\widetilde
X})=(x-2)(x^2-3x+3)(x^2-x+1)-9(g-1)+3(2-\delta).
$$
\end{proof}

\end{document}